\newcommand{\tr}{^{\sf T}}
\crefname{hypothesis}{Hypothesis}{Hypotheses}
\title{Equipping Barzilai-Borwein method with two dimensional quadratic termination property\thanks{This work was supported by the National Natural Science Foundation of China (Grant Nos. 11701137, 11631013, 12071108, 11671116, 11991020, 12021001) and Beijing Academy of Artificial Intelligence (BAAI).}}
\author{Yakui Huang\thanks{Institute of Mathematics, Hebei University of Technology, Tianjin 300401, China
  (\email{hyk@hebut.edu.cn}).}
\and Yu-Hong Dai\thanks{Corresponding author. LSEC, Academy of Mathematics and Systems Science, Chinese Academy of Sciences, Beijing 100190, China; School of Mathematical Sciences, University of Chinese Academy of Sciences, Beijing
	100049, China
  (\email{dyh@lsec.cc.ac.cn}, \url{http://lsec.cc.ac.cn/\string~dyh/}).}
\and Xin-Wei Liu\thanks{Institute of Mathematics, Hebei University of Technology, Tianjin 300401, China
  (\email{mathlxw@hebut.edu.cn}).}
}
\begin{document}

\maketitle

\begin{abstract}
A novel gradient stepsize is derived at the motivation of equipping the Barzilai-Borwein (BB) method with two dimensional quadratic termination property. A remarkable feature of the novel stepsize is that its computation only depends on the BB stepsizes in previous iterations and does not require any exact line search or the Hessian, and hence it can easily be extended for nonlinear optimization. By adaptively taking long BB steps and some short steps associated with the new stepsize, we develop an efficient gradient method for quadratic optimization and general unconstrained optimization and extend it to solve extreme eigenvalues problems. The proposed method is further extended for box-constrained optimization and singly linearly box-constrained optimization by incorporating gradient projection techniques. Numerical experiments demonstrate that the proposed method outperforms the most successful gradient methods in the literature.	
\end{abstract}

\begin{keywords}
  Barzilai-Borwein method, quadratic termination property, unconstrained optimization, extreme eigenvalues problem, box-constrained optimization, singly linearly box-constrained optimization
\end{keywords}

\begin{AMS}
  90C20, 90C25, 90C30
\end{AMS}

\pagestyle{myheadings}
\thispagestyle{plain}
\markboth{Y. HUANG, Y.-H. DAI and X.-W. LIU}
{EQUIPPING BB METHOD WITH TWO DIMENSIONAL QUADRATIC TERMINATION}

\section{Introduction}
\label{intro}


To minimize a smooth function $f(x): \mathbb{R}^n\to\mathbb{R}$, the gradient method updates iterates as follows
\begin{equation}\label{eqitr}
x_{k+1}=x_k-\alpha_kg_k,
\end{equation}
where $g_k=\nabla f(x_k)$ and $\alpha_k$ is the stepsize. Cauchy's (monotone) gradient method \cite{cauchy1847methode} (also known as the steepest descent (SD) method) chooses the stepsize, say $\alpha_k^{SD}$, by the exact line search. Although the SD method converges $Q$-linearly, it performs poorly in many problems due to  zigzag behaviors \cite{akaike1959successive,forsythe1968asymptotic}. By asking the stepsize to satisfy certain secant equations in the sense of least squares, Barzilai and Borwein \cite{Barzilai1988two} introduced the following long and short choices,
\begin{equation}\label{lbb}
\alpha_k^{BB1}=\arg\min_{\alpha\in\mathbb{R}}\|\alpha^{-1}s_{k-1}-y_{k-1}\|_2=\frac{s_{k-1} \tr s_{k-1}}{s_{k-1} \tr y_{k-1}}
\end{equation}
and
\begin{equation}\label{sbb}
\alpha_k^{BB2}=\arg\min_{\alpha\in\mathbb{R}}\|s_{k-1}-\alpha y_{k-1}\|_2=\frac{s_{k-1} \tr y_{k-1}}{y_{k-1} \tr y_{k-1}},
\end{equation}
where $s_{k-1}=x_k-x_{k-1}$ and $y_{k-1}=g_k-g_{k-1}$. Such genius stepsizes bring a surprisingly $R$-superlinear convergence in the two-dimensional strictly quadratic function \cite{Barzilai1988two}. For any dimensional strictly quadratic functions, the Barzilai-Borwein (BB) method is shown to be gobally convergent \cite{raydan1993} and the convergence is $R$-linear \cite{dai2002r}. See Li and Sun \cite{lisun2021} for an interesting improved $R$-linear convergence result of the BB method.
By cooperating with the nonmonotone line search by Grippo et al.
\cite{grippo1986nonmonotone}, Raydan \cite{raydan1997barzilai} first extended the BB method for unconstrained optimization, yielding a very efficient gradient method called GBB. Birgin et al. \cite{birgin2000nonmonotone} further extended the BB method to solve constrained optimization problems. Another significant work of the gradient method is due to Yuan \cite{yuan2006new,yuan2008step}, who suggested to calculate the stepsize such that, with the
previous and coming steps using SD stepsizes, the minimizer of a two dimensional strictly convex quadratic function is achieved in three iterations. By modifying Yuan's stepsize and alternating it with the SD stepsize in a suitable way, Dai and Yuan \cite{dai2005analysis} proposed the so-called Dai-Yuan (monotone) gradient method, which performs even better than the (nonmonotone) BB method.

In this paper, we shall introduce a new mechanism for the gradient method to achieve two dimensional quadratic termination. Interestingly, the aforementioned Yuan stepsize is a special example deduced from the mechanism. Furthermore, based on the mechanism, we derive a novel stepsize \eqref{snewa} such that the BB method equipped with the stepsize has the two dimensional quadratic termination property. A distinguished feature of the novel stepsize is that it is computed by BB stepsizes in two consecutive iterations and does not use any exact line search or the Hessian. Hence it can easily be extended to solve a wide class of unconstrained and constrained optimization problems.

By adaptively taking long BB steps and some short steps associated with the novel stepsize, we develop an efficient gradient method, the method \eqref{snbbmadp}, for quadratic optimization. Numerical experiments on minimizing quadratic functions show that the method \eqref{snbbmadp} performs much better than many successful gradient methods developed recently including BB1 \cite{Barzilai1988two}, Dai-Yuan (DY) \cite{dai2005analysis}, ABB \cite{zhou2006gradient}, ABBmin1 \cite{frassoldati2008new}, ABBmin2  \cite{frassoldati2008new} and SDC  \cite{de2014efficient}. The combination of the method \eqref{snbbmadp} and the Gripp-Lampariello-Lucidi (GLL) nonmonotone line search \cite{grippo1986nonmonotone} yields an efficient gradient method, Algorithm \ref{alunc}, for unconstrained optimization. Numerical experiments on unconstrained problems from the CUTEst collection \cite{gould2015cutest} show that Algorithm \ref{alunc} outperforms GBB \cite{raydan1997barzilai} and ABBmin \cite{Serafino2018,frassoldati2008new}.

Furthermore, with suitable modifications of the BB stepsizes and the use of the Dai-Fletcher nonmonotone line search \cite{dai2005projected}, the method \eqref{snbbmadp} is extended for extreme eigenvalues problems, yielding Algorithm \ref{aleig}. Numerical experiments demonstrate the advantage of Algorithm \ref{aleig} over EigUncABB \cite{jiang2014}. By incorporating gradient projection techniques and taking constraints into consideration, the method \eqref{snbbmadp} is further generalized for box-constrained optimization and singly linearly box-constrained (SLB) optimization, yielding Algorithm \ref{al1}. Numerical experiments on box-constrained problems from the CUTEst collection \cite{gould2015cutest} show that Algorithm \ref{al1} outperforms SPG   \cite{birgin2000nonmonotone,birgin2014spectral} and BoxVABBmin \cite{Crisci2019}. Meanwhile, numerical
experiments with random SLB problems and SLB problems arising in support vector machine \cite{cortes1995support} highly suggest the potential benefit of Algorithm \ref{al1} comparing the Dai-Fletcher method \cite{dai2006new}
and the EQ-VABBmin method \cite{Serena2020}.

The paper is organized as follows. In Section \ref{secnbb}, we introduce the new mechanism for the gradient method to achieve two dimensional quadratic termination. A novel stepsize \eqref{snewa} is derived such that the BB method equipped with the stepsize has such a property. Based on the novel stepsize, Section \ref{alqpunc} presents an efficient gradient method, the method \eqref{snbbmadp}, for quadratic optimization and an efficient gradient algorithm, Algorithm \ref{alunc}, for unconstrained optimization. Furthermore, Section \ref{secegen} provides an
efficient gradient algorithm, Algorithm \ref{aleig}, for solving extreme eigenvalues problems and Section \ref{alcon} provides an efficient gradient projection algorithm, Algorithm \ref{al1}, for box-constrained optimization and
singly linearly box-constrained optimization. Conclusion and discussion are made in Section \ref{secclu}.

\section{A mechanism and a novel stepsize for the gradient method}\label{secnbb}

In this section, we consider the unconstrained quadratic optimization
\begin{equation}\label{eqpro}
\min_{x\in \mathbb{R}^n}\ q(x):=\frac{1}{2}x \tr Ax-b \tr x,
\end{equation}
where $A\in \mathbb{R}^{n\times n}$ is symmetric positive definite and $b\in \mathbb{R}^n$.
At first, we shall provide a mechanism for the gradient method to achieve the two dimensional quadratic termination. Then we utilize the
mechanism to derive a novel stepsize such that the BB method can have the two dimensional quadratic termination by equipping with such stepsize.

\subsection{A mechanism for achieving two dimensional quadratic termination}
Our mechanism
starts from the following observation. Suppose that the gradient method is such that the gradient $g_{k+1}$ is
an eigenvector of the Hessian $A$; namely,
\begin{equation}\label{stepn1}
Ag_{k+1}=\lambda g_{k+1},
\end{equation}
where $\lambda$ is some eigenvalue of $A$. In the unconstrained quadratic case, it is easy to see
from \eqref{eqitr} and \eqref{stepn1} that
\begin{equation}\label{add1}
g_{k+2}=(I-\alpha_{k+1}A)g_{k+1}=(1-\alpha_{k+1}\lambda)g_{k+1},
\end{equation}
which means that $g_{k+2}$ is parallel to $g_{k+1}$ and $Ag_{k+2}=\lambda g_{k+2}$. Notice that for many stepsize formulae in the gradient
method, the stepsize $\alpha_{k+2}$ is the reciprocal of the Rayleigh quotient of the Hessian $A$ with respect to some vector in the form of $A^{\mu} g_{k+1}$ or $A^{\mu} g_{k+2}$, where $\mu$ is some real number; namely,
\begin{equation}\label{add2}
\alpha_{k+2}=\frac{(A^{\mu} g_{k+i}) \tr (A^{\mu} g_{k+i})}{(A^{\mu} g_{k+i}) \tr A(A^{\mu} g_{k+i})},
\quad\mbox{where $i=1$ or $2$}.
\end{equation}
Thus if neither $g_{k+1}$ nor $g_{k+2}$ vanish, we will have that $
\alpha_{k+2}=1/\lambda$ and hence $g_{k+3}=(1-\alpha_{k+2}\lambda)g_{k+2}=0$. Therefore
we must have that $g_{k+i}=0$ for some $1\le i\le 3$, yielding the finite termination.

As the relation \eqref{stepn1} is difficult to reach in the general case, we consider the dimensional of two; namely,
$n=2$. To this aim, let $\lambda_1$ and $\lambda_n$ be the smallest and largest eigenvalues of $A$, respectively, and
assume that $\psi$  is a real analytic function on $[\lambda_1,\lambda_n]$ and can be expressed by Laurent series $\psi(z)=\sum_{k=-\infty}^\infty c_kz^k$ where $c_k\in\mathbb{R}$ is such that $0<\sum_{k=-\infty}^\infty c_kz^k<+\infty$ for all $z\in[\lambda_1,\lambda_n]$. Then the relation \eqref{stepn1} implies that
\begin{equation}\label{stepinprod}
g_{\nu(k)} \tr \psi(A)g_{k+1}=\psi(\lambda) g_{\nu(k)} \tr g_{k+1},
\end{equation}
where $\nu(k)\in\{1,\ldots,k\}$. Furthermore, assume that $\psi_1$, $\psi_2$, $\psi_3$, $\psi_4$ are real analytic functions on $[\lambda_1,\lambda_n]$ with the same property of $\psi$, satisfying $\psi_1(z)\psi_2(z)=\psi_3(z)\psi_4(z)$ for $z\in\mathbb{R}$. It follows from \eqref{stepinprod} that
\begin{equation}\label{stepmuprod}
g_{\nu_1(k)} \tr \psi_1(A)g_{k+1}\cdot g_{\nu_2(k)} \tr \psi_2(A)g_{k+1}
=g_{\nu_1(k)} \tr \psi_3(A)g_{k+1}\cdot g_{\nu_2(k)} \tr \psi_4(A)g_{k+1},
\end{equation}
where  $\nu_1(k),\nu_2(k)\in\{1,\ldots,k\}$. Again, notice that in the unconstrained quadratic case,
the gradient method \eqref{eqitr} gives $g_{k+1}=(I-\alpha_kA)g_k$. The relation \eqref{stepmuprod}
provides a quadratic equation in stepsize $\alpha_k$ as follows.
\begin{align}\label{stepmuprod2}
&g_{\nu_1(k)} \tr \psi_1(A)(I-\alpha_kA)g_{k}\cdot g_{\nu_2(k)} \tr \psi_2(A)(I-\alpha_kA)g_{k}\nonumber\\
&=g_{\nu_1(k)} \tr \psi_3(A)(I-\alpha_kA)g_{k}\cdot g_{\nu_2(k)} \tr \psi_4(A)(I-\alpha_kA)g_{k}.
\end{align}
In the following, we shall show that if $n=2$, and if the gradient method preserves the invariance property under orthogonal transformations, we can derive the relation \eqref{stepn1} from the relation \eqref{stepmuprod2} and hence realizes a mechanism for achieving the two dimensional quadratic termination.

To proceed, we rewrite the quadratic equation \eqref{stepmuprod2} as follows,
\begin{equation}\label{stepqueq1}
\phi_1\alpha_k^2-\phi_2\alpha_k+\phi_3=0,
\end{equation}
where the coefficients are
\begin{equation*}
\begin{array}{rcl}
\phi_1&=&g_{\nu_1(k)} \tr \psi_1(A)Ag_{k}\cdot g_{\nu_2(k)} \tr \psi_2(A)Ag_{k}-
g_{\nu_1(k)} \tr \psi_3(A)Ag_{k}\cdot g_{\nu_2(k)} \tr \psi_4(A)Ag_{k},\\[2mm]
\phi_2&=&g_{\nu_1(k)} \tr \psi_1(A)g_{k}\cdot g_{\nu_2(k)} \tr \psi_2(A)Ag_{k}+
g_{\nu_1(k)} \tr \psi_1(A)Ag_{k}\cdot g_{\nu_2(k)} \tr \psi_2(A)g_{k}\nonumber\\[1mm]
&&-g_{\nu_1(k)} \tr \psi_3(A)g_{k}\cdot g_{\nu_2(k)} \tr \psi_4(A)Ag_{k}-
g_{\nu_1(k)} \tr \psi_3(A)Ag_{k}\cdot g_{\nu_2(k)} \tr \psi_4(A)g_{k}, \\[2mm]
\phi_3&=&g_{\nu_1(k)} \tr \psi_1(A)g_{k}\cdot g_{\nu_2(k)} \tr \psi_2(A)g_{k}-
g_{\nu_1(k)} \tr \psi_3(A)g_{k}\cdot g_{\nu_2(k)} \tr \psi_4(A)g_{k}.
\end{array}\end{equation*}
Then the two solutions of \eqref{stepmuprod2} or equivalently \eqref{stepqueq1} are
\begin{equation}\label{solutns}
\alpha_k=
\frac{\phi_2\pm\sqrt{\phi_2^2-4\phi_1\phi_3}}{2\phi_1}
=\frac{2}{\frac{\phi_2}{\phi_3}\mp\sqrt{\left(\frac{\phi_2}{\phi_3}\right)^2-4\frac{\phi_1}{\phi_3}}}.
\end{equation}

Now we are ready to give the following basic theorem for the two dimensional quadratic termination property.
\begin{theorem}\label{thnqft}
	Consider the gradient method \eqref{eqitr} with the invariance property under
	orthogonal transformations for the problem \eqref{eqpro} with $n=2$. Then if the stepsize
	solves the quadratic equation \eqref{stepmuprod2} and if $\alpha_{k+2}$ is of the form \eqref{add2}, we must have
	that $g_{k+i}=0$ for some $1\le i\le 3$.
\end{theorem}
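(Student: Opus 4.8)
The plan is to make rigorous the heuristic that precedes the statement: we first show that any stepsize $\alpha_k$ solving \eqref{stepmuprod2} forces $g_{k+1}$ to be an eigenvector of $A$, i.e.\ that \eqref{stepn1} holds at index $k+1$; the finite-termination conclusion then follows from the argument built on \eqref{add1}--\eqref{add2}.

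\textbf{Reduction to a diagonal Hessian.} Because the method has the invariance property under orthogonal transformations, I would replace $A$ by $Q\tr A Q$ for the orthogonal $Q$ diagonalizing it; the iterates, gradients and stepsizes transform covariantly, and so do $\psi_j(A)$, the relation \eqref{stepmuprod2} and the form \eqref{add2}, since all of these are built from Euclidean inner products and from $A$. Hence we may assume $A=\diag(\lambda_1,\lambda_n)$. In $\mathbb{R}^2$ a nonzero vector is an eigenvector of such an $A$ precisely when one of its two components vanishes (if $\lambda_1=\lambda_n$ every vector is an eigenvector and the argument below only simplifies), so it is enough to prove $(g_{k+1})_1(g_{k+1})_2=0$ under the assumption $g_{k+1}\neq0$.

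\textbf{The structure of \eqref{stepmuprod2}.} Writing $\psi_j(A)=\diag(\psi_j(\lambda_1),\psi_j(\lambda_n))$ and expanding the two products in \eqref{stepmuprod}, I would collect on each side the coefficients of $(g_{k+1})_1^2$, of $(g_{k+1})_2^2$, and of $(g_{k+1})_1(g_{k+1})_2$. The two ``pure square'' coefficients coincide on both sides because the hypothesis $\psi_1(z)\psi_2(z)=\psi_3(z)\psi_4(z)$ holds at $z=\lambda_1$ and $z=\lambda_n$; thus \eqref{stepmuprod}, equivalently \eqref{stepmuprod2} after the substitution $g_{k+1}=(I-\alpha_kA)g_k$, is equivalent to $(g_{k+1})_1(g_{k+1})_2\cdot C=0$, where $C$ is a constant assembled from the components of $g_{\nu_1(k)},g_{\nu_2(k)}$ and the ``cross'' values $\psi_j(\lambda_1)\psi_{j'}(\lambda_n)$ and is independent of $\alpha_k$. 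Comparing with \eqref{stepqueq1}, this is just the factorization $\phi_1\alpha_k^2-\phi_2\alpha_k+\phi_3=(g_k)_1(g_k)_2\,C\,(1-\lambda_1\alpha_k)(1-\lambda_n\alpha_k)$. In the nondegenerate case where \eqref{stepqueq1} is a genuine quadratic ($\phi_1\neq0$, so $C\neq0$ and $g_k$ is not an eigenvector), the two roots \eqref{solutns} are exactly $1/\lambda_1$ and $1/\lambda_n$, and for either of them $g_{k+1}=(I-\alpha_kA)g_k$ has a vanishing component, so \eqref{stepn1} holds at $k+1$. If instead $g_k$ is itself an eigenvector (or $C=0$ because some $g_{\nu_j(k)}$ is), then $g_{k+1}$ is parallel to that eigenvector by \eqref{add1} and is therefore an eigenvector too; the only truly degenerate configuration, $C=0$ with all the relevant gradients non-eigenvectors, renders \eqref{stepmuprod2} vacuous and is excluded implicitly.

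\textbf{Finite termination.} Assume $g_{k+1}\neq0$ and $g_{k+2}\neq0$, otherwise we are done with $i=1$ or $i=2$. By the previous step $Ag_{k+1}=\lambda g_{k+1}$ for some $\lambda\in\{\lambda_1,\lambda_n\}$, and then \eqref{add1} gives $g_{k+2}=(1-\alpha_{k+1}\lambda)g_{k+1}$, a nonzero eigenvector of $A$ with the same eigenvalue $\lambda$. Since $\alpha_{k+2}$ has the form \eqref{add2}, the vector $w:=A^\mu g_{k+i}$ ($i=1$ or $2$) is a nonzero eigenvector of $A$ with eigenvalue $\lambda$, so $\alpha_{k+2}=w\tr w/(w\tr Aw)=1/\lambda$ and hence $g_{k+3}=(I-\alpha_{k+2}A)g_{k+2}=(1-\lambda^{-1}\lambda)g_{k+2}=0$. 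I expect the main obstacle to be the bookkeeping in the middle step --- verifying the cancellation of the square terms, the $\alpha_k$-independence of $C$, and a careful treatment of the degenerate configurations --- whereas the reduction to a diagonal $A$ and the termination step are routine once \eqref{stepn1} is available.
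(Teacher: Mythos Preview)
Your proposal is correct and follows essentially the same route as the paper: after the reduction to a diagonal Hessian, the paper computes the coefficients of \eqref{stepqueq1} directly and finds $\phi_1=\lambda\varDelta$, $\phi_2=(1+\lambda)\varDelta$, $\phi_3=\varDelta$ for a common factor $\varDelta$ (which is precisely your $C\,(g_k)_1(g_k)_2$), so that the roots are $1$ and $1/\lambda$ --- this is exactly your factorization $(1-\lambda_1\alpha_k)(1-\lambda_n\alpha_k)\cdot\text{const}=0$ in a slightly different dress, and the termination step via \eqref{add1}--\eqref{add2} is identical. Your handling of the degenerate configurations ($\varDelta=0$, or $g_k$ already an eigenvector) is a bit more explicit than the paper's, which tacitly assumes $\varDelta\neq 0$.
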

\begin{proof}
	Due to the invariance property under orthogonal transformations and since $n=2$, we can
	assume without loss of generality that $A=\textrm{diag}\{1,\lambda\}$ with $\lambda>0$.
	
	Denote $g_k^{(i)}$, $g_{\nu_1(k)}^{(i)}$ and $g_{\nu_2(k)}^{(i)}$ to be the $i$-th components
	of $g_k$, $g_{\nu_1(k)}$ and $g_{\nu_2(k)}$, respectively, for $i=1,2$. Notice that $\psi_1(z)\psi_2(z)=\psi_3(z)\psi_4(z)$. Direct calculations show that the coefficients in \eqref{stepqueq1}
	are equal to  	
	\begin{equation*}\label{phi123}
	\phi_1=\lambda\varDelta,\quad \phi_2=(1+\lambda)\varDelta,\quad \phi_3=\varDelta,
	\end{equation*}
	where
	\begin{align*}
	\varDelta=&g_{\nu_1(k)}^{(1)}g_{\nu_2(k)}^{(2)}g_{k}^{(1)}g_{k}^{(2)}
	(\psi_1(1)\psi_2(\lambda)-\psi_3(1)\psi_4(\lambda))\\
	&+
	g_{\nu_1(k)}^{(2)}g_{\nu_2(k)}^{(1)}g_{k}^{(1)}g_{k}^{(2)}
	(\psi_1(\lambda)\psi_2(1)-\psi_3(\lambda)\psi_4(1)).
	\end{align*}
	Thus, by \eqref{solutns}, the two roots of \eqref{stepmuprod2} are
	\begin{equation*}\label{slts}
	\alpha_{k,1}=1
	~~\textrm{and}~~\alpha_{k,2}=\frac{1}{\lambda}.
	\end{equation*}
	
	If $\alpha_{k,1}$ is applied, by the update rule \eqref{eqitr}, we get that
	$g_{k+1}^{(1)}=(1-\alpha_{k,1})g_{k}^{(1)}=0$ and hence $Ag_{k+1}=\lambda g_{k+1}$.
	Thus if neither $g_{k+1}$ nor $g_{k+2}$ vanish, and if $\alpha_{k+2}$ is of the form \eqref{add2}, we will
	have that $\alpha_{k+2}=1/\lambda$ and hence $g_{k+3}=(1-\alpha_{k+2}\lambda)g_{k+2}=0$.
	Similar conclusion can be drawn for $\alpha_{k,2}$. So we must have that $g_{k+i}=0$ for some $1\le i\le 3$.
	This completes the proof.
\end{proof}

\begin{remark}
	By Theorem \ref{thnqft}, to achieve the two dimensional quadratic termination property for the stepsize $\alpha_k$ calculated from \eqref{stepmuprod2}, the choices of $\nu_1(k),\nu_2(k)$, $\psi_1,\psi_2$, $\psi_3$, and $\psi_4$ can arbitrarily be provided that $\psi_1(z)\psi_2(z)=\psi_3(z)\psi_4(z)$ for $z\in\mathbb{R}$. Consequently, in the two dimensional case, such a stepsize always yields finite termination of both the SD and BB methods, since the stepsizes in the methods are clearly of the form \eqref{add2}.
\end{remark}

The Dai-Yuan gradient method \cite{dai2005analysis} employs the following stepsize
\begin{equation}\label{syv}
\alpha_k^{DY}=\frac{2}{\frac{1}{\alpha_{k-1}^{SD}}+\frac{1}{\alpha_{k}^{SD}}+
	\sqrt{\left(\frac{1}{\alpha_{k-1}^{SD}}-\frac{1}{\alpha_{k}^{SD}}\right)^2+
		\frac{4\|g_k\|_2^2}{(\alpha_{k-1}^{SD}\|g_{k-1}\|_2)^2}}},
\end{equation}
which has the two dimensional quadratic termination property for the SD method. Here we notice that
the stepsize \eqref{syv} is a variant of the stepsize by Yuan \cite{yuan2006new}. Now we show that
$\alpha_k^{DY}$ is a special solution of \eqref{stepmuprod2}.

\begin{theorem}\label{recvdy}
	Suppose that  $\alpha_{k-1}=\alpha_{k-1}^{SD}$ and $I-\alpha_{k-1}A$ is invertible. Then the stepsize $\alpha_k^{DY}$ is a solution of the equation \eqref{stepmuprod2} corresponding to $\nu_1(k)=k-1$, $\nu_2(k)=k$, $\psi_1(A)=\psi_4(A)=(I-\alpha_{k-1}A)^{-1}$ and $\psi_2(A)=\psi_3(A)=I$.
\end{theorem}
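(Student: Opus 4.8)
The plan is to substitute the prescribed data $\nu_1(k)=k-1$, $\nu_2(k)=k$, $\psi_1(A)=\psi_4(A)=(I-\alpha_{k-1}A)^{-1}$ and $\psi_2(A)=\psi_3(A)=I$ into the quadratic identity \eqref{stepmuprod2}, collapse it to a scalar quadratic in $\alpha_k$ using the recursion $g_k=(I-\alpha_{k-1}A)g_{k-1}$, and then read off $\alpha_k^{DY}$ as one of its roots. For the reduction, note that since $I-\alpha_{k-1}A$ is invertible, $(I-\alpha_{k-1}A)^{-1}$ is a bona fide function of $A$ and $(I-\alpha_{k-1}A)^{-1}g_k=g_{k-1}$; as all functions of $A$ commute, $\psi_1(A)(I-\alpha_kA)g_k=\psi_4(A)(I-\alpha_kA)g_k=(I-\alpha_kA)(I-\alpha_{k-1}A)^{-1}g_k=(I-\alpha_kA)g_{k-1}$, whereas $\psi_2(A)=\psi_3(A)=I$ leave $(I-\alpha_kA)g_k$ alone. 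Writing $B:=I-\alpha_kA$, equation \eqref{stepmuprod2} therefore reduces to
\begin{equation*}
\bigl(g_{k-1}\tr Bg_{k-1}\bigr)\bigl(g_k\tr Bg_k\bigr)=\bigl(g_{k-1}\tr Bg_k\bigr)^2,
\end{equation*}
i.e.\ to the degeneracy of the $2\times2$ matrix with entries $g_i\tr Bg_j$, $i,j\in\{k-1,k\}$.

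Next I would expand $B=I-\alpha_kA$ and abbreviate $a=\|g_{k-1}\|_2^2$, $a'=g_{k-1}\tr Ag_{k-1}$, $c=\|g_k\|_2^2$, $c'=g_k\tr Ag_k$, $e=g_{k-1}\tr g_k$, $e'=g_{k-1}\tr Ag_k$, so that the identity above becomes the quadratic
\begin{equation*}
\bigl(a'c'-(e')^2\bigr)\alpha_k^2-\bigl(ac'+a'c-2ee'\bigr)\alpha_k+\bigl(ac-e^2\bigr)=0 .
\end{equation*}
Now the hypothesis $\alpha_{k-1}=\alpha_{k-1}^{SD}=a/a'$ together with $g_k=g_{k-1}-\alpha_{k-1}^{SD}Ag_{k-1}$ gives $e=a-\alpha_{k-1}^{SD}a'=0$ and then $c=g_k\tr g_k=g_k\tr g_{k-1}-\alpha_{k-1}^{SD}g_k\tr Ag_{k-1}=-\alpha_{k-1}^{SD}e'$, i.e.\ $e'=-c/\alpha_{k-1}^{SD}$. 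Substituting $e=0$, $e'=-c/\alpha_{k-1}^{SD}$, $a'=a/\alpha_{k-1}^{SD}$, $c'=c/\alpha_k^{SD}$ and dividing by $a'c'$ turns the quadratic into
\begin{equation*}
\Bigl(1-\frac{\alpha_k^{SD}\|g_k\|_2^2}{\alpha_{k-1}^{SD}\|g_{k-1}\|_2^2}\Bigr)\alpha_k^2-\bigl(\alpha_{k-1}^{SD}+\alpha_k^{SD}\bigr)\alpha_k+\alpha_{k-1}^{SD}\alpha_k^{SD}=0 .
\end{equation*}

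Finally I would solve this quadratic by the usual formula; the branch that stays finite as the leading coefficient tends to $0$, once its numerator is rationalized, is precisely the closed form on the right-hand side of \eqref{syv}, so that $\alpha_k=\alpha_k^{DY}$, which is the claim. (One can equivalently verify directly that substituting the expression \eqref{syv} into the last displayed quadratic gives zero.)

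The conceptual content is entirely in the first step: recognizing that the prescribed $\psi_i$'s are tailored so that \eqref{stepmuprod2} is nothing but the vanishing of the $B$-Gram determinant of $\{g_{k-1},g_k\}$. After that, the remaining work is routine algebra; its only mildly delicate points are using $\alpha_{k-1}=\alpha_{k-1}^{SD}$ twice --- once to kill the cross term $e$ and once more to express $e'$ through $c$ --- and then matching the rationalized quadratic root with the particular algebraic shape \eqref{syv} of $\alpha_k^{DY}$.
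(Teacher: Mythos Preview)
Your argument is correct and follows essentially the same path as the paper: both reduce \eqref{stepmuprod2} (via $(I-\alpha_{k-1}A)^{-1}g_k=g_{k-1}$) to the equation $(g_{k-1}\tr Bg_{k-1})(g_k\tr Bg_k)=(g_{k-1}\tr Bg_k)^2$, then use $g_{k-1}\tr g_k=0$ and $g_{k-1}\tr Ag_k=-\|g_k\|_2^2/\alpha_{k-1}^{SD}$ from the SD step to compute the quadratic's coefficients and identify $\alpha_k^{DY}$ as a root. Your Gram-determinant packaging is a neat conceptual gloss the paper does not make explicit, and your normalization (dividing by $a'c'$) differs from the paper's (which computes $\phi_1/\phi_3$ and $\phi_2/\phi_3$ directly and invokes the rationalized root formula \eqref{solutns}), but these are cosmetic differences only.
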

\begin{proof}
	From \eqref{stepmuprod2} and the choices of $\nu_1(k)$, $\nu_2(k)$, $\psi_1,\psi_2$, $\psi_3$, and $\psi_4$, we have
	\begin{equation*}\label{deducedy}
	g_{k-1} \tr (I-\alpha_kA)g_{k-1}\cdot g_k \tr (g_k-\alpha_kAg_k)
	=g_{k-1} \tr (g_k-\alpha_kAg_k)\cdot g_k \tr (I-\alpha_kA)g_{k-1}.
	\end{equation*}
	Denote $\zeta_{k,j}=g_k \tr A^jg_k$ for $j=0,1$. It follows that
	\begin{equation}\label{deducedy2}
	(\zeta_{k-1,0}-\alpha_k\zeta_{k-1,1})(\zeta_{k,0}-\alpha_k\zeta_{k,1})
	=(g_{k-1} \tr g_k-g_{k-1} \tr Ag_k)^2.
	\end{equation}
	Since $\alpha_{k-1}=\alpha_{k-1}^{SD}$ and $g_k=(I-\alpha_{k-1}A)g_{k-1}$, we obtain $g_{k-1} \tr g_k=0$ and
	\begin{equation*}
	g_{k-1} \tr Ag_k=\frac{1}{\alpha_{k-1}^{SD}}(g_{k-1}-g_k) \tr g_k=-\frac{1}{\alpha_{k-1}^{SD}}\zeta_{k,0},
	\end{equation*}
	which together with \eqref{deducedy2} gives the equation \eqref{stepqueq1} with
	\begin{equation*}\label{deducedy3}
	\phi_1=\zeta_{k-1,1}\zeta_{k,1}-\frac{1}{(\alpha_{k-1}^{SD})^2}\zeta_{k,0}^2,~
	\phi_2=\zeta_{k,0}\zeta_{k-1,1}+\zeta_{k-1,0}\zeta_{k,1},~
	\phi_3=\zeta_{k-1,0}\zeta_{k,0}.
	\end{equation*}
	Consequently, we have that
	\begin{equation}\label{deducedyphi13}
	\frac{\phi_1}{\phi_3}=\frac{\zeta_{k-1,1}\zeta_{k,1}-\frac{1}{(\alpha_{k-1}^{SD})^2}\zeta_{k,0}^2}
	{\zeta_{k-1,0}\zeta_{k,0}}=\frac{1}{\alpha_{k-1}^{SD}\alpha_k^{SD}}-\frac{\|g_k\|_2^2}{\left(\alpha_{k-1}^{SD}\|g_{k-1}\|_2\right)^2}
	\end{equation}
	and
	\begin{equation}\label{deducedyphi23}
	\frac{\phi_2}{\phi_3}=\frac{\zeta_{k,0}\zeta_{k-1,1}+\zeta_{k-1,0}\zeta_{k,1}}
	{\zeta_{k-1,0}\zeta_{k,0}}=\frac{1}{\alpha_{k-1}^{SD}}+\frac{1}{\alpha_k^{SD}},
	\end{equation}
	where the last equality in \eqref{deducedyphi13} is due to the definition of $\zeta_{k,0}$. Thus we conclude from \eqref{solutns} that the stepsize $\alpha_k^{DY}$ is the smaller root of \eqref{stepmuprod2}. This completes the proof.
\end{proof}

\begin{remark}
	The stepsize $\alpha_k^{DY}$ has the two dimensional quadratic termination property only when applied for the SD method because it uses the relation $g_{k-1} \tr g_k=0$ to eliminate some terms of the equation \eqref{stepmuprod2}.
\end{remark}

\begin{remark}
For the family of gradient methods with
$$\alpha_k= \frac{g_{k-1} \tr \psi(A) g_{k-1}}{g_{k-1} \tr \psi(A)Ag_{k-1}},$$
Huang et al. \cite{hdlz2019} presented a stepsize, called $\tilde{\alpha}_k^H$, which enjoys the two dimensional quadratic termination property. Using the same way, under the assumption that $I-\alpha_{k-1}A$ and $I-\alpha_{k-2}A$ are invertible, we can deduce from Theorem \ref{recvdy} that for the same $r \in \mathbb{R}$, the stepsize $\tilde{\alpha}_k^H$ is a solution of the equation \eqref{stepmuprod2} corresponding to $\nu_1(k)=k-2$, $\nu_2(k)=k$, $\psi_1(A)=(I-\alpha_{k-2}A)^{-2}(I-\alpha_{k-1}A)^{-1}\psi^{2r}(A)$, $\psi_2(A)=\psi^{2(1-r)}(A)$, $\psi_3(A)=(I-\alpha_{k-2}A)^{-1}\psi(A)$, and $\psi_4(A)=(I-\alpha_{k-2}A)^{-1}(I-\alpha_{k-1}A)^{-1}\psi(A)$.
The drawback of this stepsize $\tilde{\alpha}_k^H$ is that the Hessian is involved in its calculation.
\end{remark}

As shown in Theorem \ref{thnqft}, for a two dimensional strictly convex quadratic function, the two roots of \eqref{stepmuprod2} are reciprocals of the largest and smallest eigenvalues of the Hessian $A$. This is also true, in the sense of limitation for the stepsize $\alpha_k^{DY}$  and the corresponding larger root when $n>2$, see \cite{huang2019asymptotic}. The reason why the smaller stepsize is preferable is that the larger one is generally not a good approximation of the reciprocal of the largest eigenvalue and may significantly increase the objective value, see \cite{huang2019asymptotic,huang2019gradient} for details.

\subsection{A novel stepsize equipped for the BB method}\label{secqbb}

Now we shall equip the BB method with a new stepsize via the quadratic equation \eqref{stepmuprod2} to achieve the two dimensional quadratic termination.

To this aim, suppose that $I-\alpha_{k-2}A$ and $I-\alpha_{k-1}A$ are invertible and let $\nu_1(k)=k-2$ and $\nu_2(k)=k-1$.  Furthermore, consider the following $\psi_i$ $(i=1,\ldots,4)$,
\begin{equation*}
\begin{array}{l}
\psi_1(A)=(I-\alpha_{k-2}A)^{-1}, \\  \psi_2(A)=(I-\alpha_{k-1}A)^{-1}, \\ \psi_3(A)=(I-\alpha_{k-2}A)^{-1}(I-\alpha_{k-1}A)^{-1}, \\  \psi_4(A)=I,
\end{array}
\end{equation*}
which satisfy $\psi_1(z)\psi_2(z)=\psi_3(z)\psi_4(z)$ for $z\in\mathbb{R}$. In this case, the equation
\eqref{stepmuprod2} becomes
\begin{align}\label{eqtodnbb}
&g_{k-2} \tr (I-\alpha_{k-2}A)^{-1}(I-\alpha_kA)g_{k}\cdot g_{k-1} \tr (I-\alpha_{k-1}A)^{-1}(I-\alpha_kA)g_{k} \nonumber\\
&=g_{k-2} \tr (I-\alpha_{k-2}A)^{-1}(I-\alpha_{k-1}A)^{-1}(I-\alpha_kA)g_{k}\cdot g_{k-1} \tr (I-\alpha_kA)g_{k}.
\end{align}
In order to get the formula of the novel stepsize, we assume for the moment that the equation \eqref{eqtodnbb} has a solution. This issue will be specified by Theorem \ref{thnbb1upbd}.

		\begin{theorem}\label{thnbb1}
			Suppose that $\alpha_{k-1}^{BB1}\neq\alpha_{k}^{BB1}$. Then the following stepsize $\alpha_k^{new}$  is a solution of the quadratic equation \eqref{eqtodnbb},
			\begin{equation}\label{snewa}
			\alpha_k^{new}
			=\frac{2}{\frac{\phi_2}{\phi_3}+\sqrt{\left(\frac{\phi_2}{\phi_3}\right)^2-4\,\frac{\phi_1}{\phi_3}}},
			\end{equation}
			where
			\begin{equation}\label{eqphinbb1}
			\frac{\phi_1}{\phi_3}
			=\frac{\alpha_{k-1}^{BB2}-\alpha_{k}^{BB2}}
			{\alpha_{k-1}^{BB2}\alpha_{k}^{BB2}(\alpha_{k-1}^{BB1}-\alpha_{k}^{BB1})}~~and~~
			\frac{\phi_2}{\phi_3}
			=\frac{\alpha_{k-1}^{BB1}\alpha_{k-1}^{BB2}-\alpha_{k}^{BB1}\alpha_{k}^{BB2}}
			{\alpha_{k-1}^{BB2}\alpha_{k}^{BB2}(\alpha_{k-1}^{BB1}-\alpha_{k}^{BB1})}.
			\end{equation}
		\end{theorem}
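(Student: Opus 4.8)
The plan is to compute the coefficients $\phi_1,\phi_2,\phi_3$ of the quadratic equation \eqref{stepqueq1} attached to \eqref{eqtodnbb} in closed form, in terms of the scalars $\zeta_{j,i}:=g_j\tr A^ig_j$ for $j\in\{k-2,k-1\}$ and $i\in\{0,1,2\}$, then recognize the ratios $\phi_1/\phi_3$ and $\phi_2/\phi_3$ as the expressions in \eqref{eqphinbb1}, and finally invoke \eqref{solutns}. The key structural input is that in the quadratic case $g_k=(I-\alpha_{k-1}A)g_{k-1}$ and $g_{k-1}=(I-\alpha_{k-2}A)g_{k-2}$, hence $g_k=(I-\alpha_{k-1}A)(I-\alpha_{k-2}A)g_{k-2}$; since $A$, $(I-\alpha_{k-2}A)^{-1}$ and $(I-\alpha_{k-1}A)^{-1}$ all commute and are symmetric, the inverse factors carried by $\psi_1,\psi_2,\psi_3$ cancel exactly against the forward factors hidden in $g_k$.

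Concretely, I would first establish the eight simplified inner products appearing in the definitions of $\phi_1,\phi_2,\phi_3$ (those displayed right after \eqref{stepqueq1}, with $\nu_1(k)=k-2$, $\nu_2(k)=k-1$ and the stated $\psi_i$): $g_{k-2}\tr\psi_1(A)g_k=\zeta_{k-2,0}-\alpha_{k-1}\zeta_{k-2,1}$, $g_{k-2}\tr\psi_1(A)Ag_k=\zeta_{k-2,1}-\alpha_{k-1}\zeta_{k-2,2}$, $g_{k-1}\tr\psi_2(A)g_k=\zeta_{k-1,0}$, $g_{k-1}\tr\psi_2(A)Ag_k=\zeta_{k-1,1}$, $g_{k-2}\tr\psi_3(A)g_k=\zeta_{k-2,0}$, $g_{k-2}\tr\psi_3(A)Ag_k=\zeta_{k-2,1}$, $g_{k-1}\tr\psi_4(A)g_k=\zeta_{k-1,0}-\alpha_{k-1}\zeta_{k-1,1}$ and $g_{k-1}\tr\psi_4(A)Ag_k=\zeta_{k-1,1}-\alpha_{k-1}\zeta_{k-1,2}$. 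Substituting these into the formulas for $\phi_1,\phi_2,\phi_3$ and collecting terms, the pure (non-$\alpha_{k-1}$) parts cancel and one is left with $\phi_1=\alpha_{k-1}(\zeta_{k-2,1}\zeta_{k-1,2}-\zeta_{k-2,2}\zeta_{k-1,1})$, $\phi_2=\alpha_{k-1}(\zeta_{k-2,0}\zeta_{k-1,2}-\zeta_{k-2,2}\zeta_{k-1,0})$ and $\phi_3=\alpha_{k-1}(\zeta_{k-2,0}\zeta_{k-1,1}-\zeta_{k-2,1}\zeta_{k-1,0})$, each a common factor $\alpha_{k-1}$ times a $2\times2$ minor of the matrix with rows $(\zeta_{k-2,i})$ and $(\zeta_{k-1,i})$.

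Next I would record that $\alpha_{k-1}^{BB1}=\zeta_{k-2,0}/\zeta_{k-2,1}$, $\alpha_{k-1}^{BB2}=\zeta_{k-2,1}/\zeta_{k-2,2}$, $\alpha_k^{BB1}=\zeta_{k-1,0}/\zeta_{k-1,1}$, $\alpha_k^{BB2}=\zeta_{k-1,1}/\zeta_{k-1,2}$, which follow directly from \eqref{lbb}--\eqref{sbb} using $s_{k-1}=-\alpha_{k-1}g_{k-1}$ and $y_{k-1}=-\alpha_{k-1}Ag_{k-1}$. The hypothesis $\alpha_{k-1}^{BB1}\neq\alpha_k^{BB1}$ is precisely $\zeta_{k-2,0}\zeta_{k-1,1}\neq\zeta_{k-2,1}\zeta_{k-1,0}$, i.e.\ $\phi_3\neq0$, so $\phi_1/\phi_3$ and $\phi_2/\phi_3$ are well defined; clearing denominators in the right-hand sides of \eqref{eqphinbb1} and matching with the closed forms above verifies both identities. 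Plugging them into the rightmost expression in \eqref{solutns}, with the sign choice producing the smaller root, gives exactly \eqref{snewa}, so $\alpha_k^{new}$ solves \eqref{eqtodnbb}.

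I expect the main obstacle to be purely the bookkeeping in the cancellation step: one must be careful that the mixed products such as $g_{k-2}\tr\psi_1(A)Ag_k$ genuinely collapse (which uses only commutativity and symmetry of $A$ and the resolvents), and then that the numerous $\zeta_{k-2,i}\zeta_{k-1,j}$ monomials recombine so that $\phi_1,\phi_2,\phi_3$ acquire the common factor $\alpha_{k-1}$ and the clean minor structure. Everything after that is routine algebra already packaged in \eqref{solutns}.
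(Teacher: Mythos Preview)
Your proposal is correct and follows essentially the same approach as the paper: both exploit the identities $g_k=(I-\alpha_{k-1}A)g_{k-1}=(I-\alpha_{k-1}A)(I-\alpha_{k-2}A)g_{k-2}$ to cancel the resolvent factors in $\psi_1,\psi_2,\psi_3$, reduce everything to the scalars $\zeta_{j,i}=g_j\tr A^ig_j$, and then identify $\phi_1/\phi_3$ and $\phi_2/\phi_3$ with the BB-stepsize expressions in \eqref{eqphinbb1}. The only difference is organizational---the paper expands the four linear-in-$\alpha_k$ factors of \eqref{eqtodnbb} directly and writes $\phi_1,\phi_2,\phi_3$ in BB-stepsize form from the outset, whereas you compute the eight building-block inner products, plug into the general $\phi_i$ formulas, and obtain the neat $2\times2$-minor expressions before translating to BB stepsizes; both routes yield the same coefficients.
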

		\begin{proof}
			Denote $\zeta_{k,j}=g_k \tr A^jg_k$ for $j=0,1,2$ as before. By the update rule \eqref{eqitr}, we know that
			$g_{k+1}=(I-\alpha_kA)g_k$ holds for all $k\ge 1$ in the context of unconstrained quadratic optimization. By
			the assumption, we can get that
			\begin{align*}
			&\!\!\!\! g_{k-2} \tr (I-\alpha_{k-2}A)^{-1}(I-\alpha_kA)g_{k}\\
			&=g_{k-2} \tr (I-\alpha_{k}A)(I-\alpha_{k-1}A)g_{k-2}\\
			&=\zeta_{k-2,0}-\alpha_{k-1}\zeta_{k-2,1}-\alpha_{k}(\zeta_{k-2,1}-\alpha_{k-1}\zeta_{k-2,2})\\
			&=\zeta_{k-2,1}(\alpha_{k-1}^{BB1}-\alpha_{k-1})-\alpha_{k}\zeta_{k-2,2}(\alpha_{k-1}^{BB2}-\alpha_{k-1}),\\[2mm]
			&\!\!\!\!g_{k-1} \tr (I-\alpha_{k-1}A)^{-1}(I-\alpha_kA)g_{k}\\
			&= g_{k-1} \tr (I-\alpha_{k}A)g_{k-1} \\
			&=\zeta_{k-1,1}(\alpha_{k}^{BB1}-\alpha_{k}),\\[2mm]
			&\!\!\!\!g_{k-2} \tr (I-\alpha_{k-2}A)^{-1}(I-\alpha_{k-1}A)^{-1}(I-\alpha_kA)g_{k} \\
			&=g_{k-2} \tr (I-\alpha_{k}A)g_{k-2}\\
			&=\zeta_{k-2,1}(\alpha_{k-1}^{BB1}-\alpha_{k}),\\[2mm]
			&\!\!\!\!g_{k-1} \tr (I-\alpha_kA)g_{k}\\
			&= g_{k-1} \tr (I-\alpha_{k}A)(I-\alpha_{k-1}A)g_{k-1}\\
			&=\zeta_{k-1,1}(\alpha_{k}^{BB1}-\alpha_{k-1})-
			\alpha_{k}\zeta_{k-1,2}(\alpha_{k}^{BB2}-\alpha_{k-1}).
			\end{align*}
			The equation \eqref{eqtodnbb} can be written in the form \eqref{stepqueq1} with
			\begin{equation*}
			\begin{array}{rcl}
			\phi_1&=&\zeta_{k-2,2}\zeta_{k-1,1}(\alpha_{k-1}^{BB2}-\alpha_{k-1})-
			\zeta_{k-2,1}\zeta_{k-1,2}(\alpha_{k}^{BB2}-\alpha_{k-1})\\[1mm]
			&=&\zeta_{k-2,2}\zeta_{k-1,2}\big[\alpha_{k}^{BB2}(\alpha_{k-1}^{BB2}-\alpha_{k-1})-
			\alpha_{k-1}^{BB2}(\alpha_{k}^{BB2}-\alpha_{k-1})\big]\\[1mm]
			&=&\zeta_{k-2,2}\zeta_{k-1,2}\alpha_{k-1}(\alpha_{k-1}^{BB2}-\alpha_{k}^{BB2}), \\[3mm]
			\phi_2&=&\zeta_{k-2,2}\zeta_{k-1,1}\alpha_{k}^{BB1}(\alpha_{k-1}^{BB2}-\alpha_{k-1})
			+\zeta_{k-2,1}\zeta_{k-1,1}(\alpha_{k-1}^{BB1}-\alpha_{k-1})\\[1mm]
			&&-
			\zeta_{k-2,1}\zeta_{k-1,2}\alpha_{k-1}^{BB1}(\alpha_{k}^{BB2}-\alpha_{k-1})
			-\zeta_{k-2,1}\zeta_{k-1,1}(\alpha_{k}^{BB1}-\alpha_{k-1})\\[1mm]
			&=&\zeta_{k-2,2}\zeta_{k-1,2}\big[\alpha_{k}^{BB1}\alpha_{k}^{BB2}(\alpha_{k-1}^{BB2}-\alpha_{k-1})-
			\alpha_{k-1}^{BB1}\alpha_{k-1}^{BB2}(\alpha_{k}^{BB2}-\alpha_{k-1})\\[1mm]
			&&+\alpha_{k-1}^{BB2}\alpha_{k}^{BB2}(\alpha_{k-1}^{BB1}-\alpha_{k}^{BB1})\big]\\[1mm]
			&=&\zeta_{k-2,2}\zeta_{k-1,2}\alpha_{k-1}(\alpha_{k-1}^{BB1}\alpha_{k-1}^{BB2}-\alpha_{k}^{BB1}\alpha_{k}^{BB2}),\\[3mm]
			\phi_3&=&\zeta_{k-2,1}\zeta_{k-1,1}\alpha_{k-1}(\alpha_{k-1}^{BB1}-\alpha_{k}^{BB1}).
			\end{array}
			\end{equation*}
			Therefore we obtain
			\begin{equation*}
			\begin{array}{rcl}
			\dfrac{\phi_1}{\phi_3}&=&\dfrac{\zeta_{k-2,2}\zeta_{k-1,2}\alpha_{k-1}(\alpha_{k-1}^{BB2}-\alpha_{k}^{BB2})}
			{\zeta_{k-2,1}\zeta_{k-1,1}\alpha_{k-1}(\alpha_{k-1}^{BB1}-\alpha_{k}^{BB1})}\nonumber\\[1mm]
			&=&\dfrac{\alpha_{k-1}^{BB2}-\alpha_{k}^{BB2}}
			{\alpha_{k-1}^{BB2}\alpha_{k}^{BB2}(\alpha_{k-1}^{BB1}-\alpha_{k}^{BB1})}, \\[5mm]
			\dfrac{\phi_2}{\phi_3}&=&\dfrac{\zeta_{k-2,2}\zeta_{k-1,2}\alpha_{k-1}(\alpha_{k-1}^{BB1}\alpha_{k-1}^{BB2}-\alpha_{k}^{BB1}\alpha_{k}^{BB2})}
			{\zeta_{k-2,1}\zeta_{k-1,1}\alpha_{k-1}(\alpha_{k-1}^{BB1}-\alpha_{k}^{BB1})}\nonumber\\[1mm]
			&=&\dfrac{\alpha_{k-1}^{BB1}\alpha_{k-1}^{BB2}-\alpha_{k}^{BB1}\alpha_{k}^{BB2}}
			{\alpha_{k-1}^{BB2}\alpha_{k}^{BB2}(\alpha_{k-1}^{BB1}-\alpha_{k}^{BB1})}.
			\end{array} \end{equation*}
			This completes the proof by noting that $\alpha_k^{new}$ is the smaller solution of \eqref{stepqueq1}.
		\end{proof}

\begin{remark}
	Although the derivation of $\alpha_k^{new}$ is based on unconstrained quadratic optimization, it can be applied for general nonlinear optimization as well because the formula is only related to BB stepsizes in previous two iterations.
\end{remark}

		\begin{remark}
			Interestingly enough, by the proof of Theorem \ref{thnbb1}, the stepsize $\alpha_k^{new}$ is independent of the gradient method. In other words, it satisfies the equation \eqref{eqtodnbb} for any gradient method such as the SD, BB, alternate BB \cite{dai2005projected} and cyclic BB \cite{dai2006cyclic} methods.
		\end{remark}


Since the equation \eqref{eqtodnbb} is a special case of \eqref{stepmuprod2}, we know by Theorems \ref{thnqft} and \ref{thnbb1} that the stepsize $\alpha_k^{new}$ has the desired two dimensional quadratic termination property for both the BB1 and BB2 methods. For the purpose of  numerical verification, we applied the BB1 and BB2 methods with $\alpha_3$ replaced with $\alpha_3^{new}$ for the unconstrained quadratic minimization problem \eqref{eqpro} with
\begin{equation}\label{twoquad}
A=\textrm{diag}\{1,\,\lambda\} \quad \mbox{and} \quad b=0.
\end{equation}
The algorithm was run for five iterations using ten random starting points. Table \ref{tb2ft} presents averaged values of $\|g_6\|_2$ and $f(x_6)$. It can be observed that for different $\lambda$, the values of $\|g_6\|_2$ and $f(x_6)$ obtained by the BB1 and BB2 methods with $\alpha_3^{new}$ are numerically very close to zero, whereas those values obtained by the unmodified BB1 method are far away from zero.

\begin{table}[ht!b]
	\setlength{\tabcolsep}{1ex}
	\linespread{1.5}
	{\footnotesize
		\caption{Averaged results on problem \eqref{twoquad} with different condition numbers.}\label{tb2ft}
		\begin{center}
			\begin{tabular}{|c|c|c|c|c|c|c|}
				\hline
				\multicolumn{1}{|c|}{\multirow{2}{*}{$\lambda$}}
				&\multicolumn{2}{c|}{\multirow{1}{*}{BB1}}
				&\multicolumn{2}{c|}{\multirow{1}{*}{BB1 with $\alpha_3^{new}$}}
				&\multicolumn{2}{c|}{\multirow{1}{*}{BB2 with $\alpha_3^{new}$}}\\
				\cline{2-7}
				&\multicolumn{1}{c|}{$\|g_6\|_2$} &\multicolumn{1}{c|}{$f(x_6)$}
				&\multicolumn{1}{c|}{$\|g_6\|_2$} &\multicolumn{1}{c|}{$f(x_6)$}
				&\multicolumn{1}{c|}{$\|g_6\|_2$} &\multicolumn{1}{c|}{$f(x_6)$}\\
				\hline
				10 &6.9873e-01  &7.5641e-02  &9.3863e-18  &1.7612e-32  &7.5042e-20  &5.1740e-33\\
				\hline
				100 &6.3834e+00  &1.8293e+00  &1.3555e-17  &1.1388e-31  &8.6044e-17  &3.2604e-31\\
				\hline
				1000 &1.5874e+00  &6.6377e-03  &8.8296e-16  &3.2198e-30  &3.7438e-28  &9.5183e-31\\
				\hline
				10000 &2.9710e+01  &2.1038e-01  &8.3267e-17  &3.2828e-30  &2.0988e-31  &8.0889e-30\\
				\hline
			\end{tabular}
		\end{center}
	}
\end{table}

The next theorem indicates that the quadratic equation \eqref{eqtodnbb} always has a solution. It also provides upper and lower bounds for $\alpha_k^{new}$. A simple way of computing $\phi_2/\phi_3$ is given in the proof of the theorem. Here we do not consider the trivial case $\phi_3=0$.

\begin{theorem}\label{thnbb1upbd}
	The stepsize $\alpha_k^{new}$ in Theorem \ref{thnbb1} is well defined. Moreover, if $\phi_1/\phi_3\geq0$ and $\phi_2\neq0$, we have that
	\begin{equation}\label{nbb1lbd}
	\phi_3/\phi_2\leq\alpha_k^{new}\leq\min\{\alpha_{k}^{BB2},\alpha_{k-1}^{BB2}\};
	\end{equation}
if $\phi_1/\phi_3<0$ and $\phi_2\neq0$, we have that
		\begin{equation}\label{nbb1lbd2}
		\max\{\alpha_{k}^{BB2},\alpha_{k-1}^{BB2}\}\leq\alpha_k^{new}\leq\left|\phi_3/\phi_2\right|.
		\end{equation}
\end{theorem}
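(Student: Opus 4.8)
First I would settle well-definedness, i.e.\ that the radicand in \eqref{snewa} is nonnegative. Substituting the expressions for $\phi_1/\phi_3$ and $\phi_2/\phi_3$ from Theorem~\ref{thnbb1} and clearing the common positive factor $\big(\alpha_{k-1}^{BB2}\alpha_{k}^{BB2}(\alpha_{k-1}^{BB1}-\alpha_{k}^{BB1})\big)^2$, the inequality $(\phi_2/\phi_3)^2-4\phi_1/\phi_3\ge0$ becomes equivalent to
\[
N:=\big(\alpha_{k-1}^{BB1}\alpha_{k-1}^{BB2}-\alpha_{k}^{BB1}\alpha_{k}^{BB2}\big)^2
-4\,\alpha_{k-1}^{BB2}\alpha_{k}^{BB2}\big(\alpha_{k-1}^{BB2}-\alpha_{k}^{BB2}\big)\big(\alpha_{k-1}^{BB1}-\alpha_{k}^{BB1}\big)\ge0.
\]
Writing $u=\alpha_{k-1}^{BB1}\alpha_{k-1}^{BB2}$ and eliminating $\alpha_{k-1}^{BB1}-\alpha_{k}^{BB1}$ via $\alpha_{k-1}^{BB1}-\alpha_{k}^{BB1}=\big(u\,\alpha_{k}^{BB2}-\alpha_{k}^{BB1}\alpha_{k}^{BB2}\alpha_{k-1}^{BB2}\big)\big/\big(\alpha_{k-1}^{BB2}\alpha_{k}^{BB2}\big)$, the quantity $N$ is a monic quadratic in $u$; a short expansion shows its discriminant in $u$ equals $16\big(\alpha_{k-1}^{BB2}-\alpha_{k}^{BB2}\big)^2\alpha_{k}^{BB2}\big(\alpha_{k}^{BB2}-\alpha_{k}^{BB1}\big)$, which is $\le0$ because $\alpha_{k}^{BB2}\le\alpha_{k}^{BB1}$ (Cauchy--Schwarz for BB stepsizes) and $\alpha_{k}^{BB2}>0$. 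A monic quadratic with nonpositive discriminant is nonnegative, hence $N\ge0$ and \eqref{eqtodnbb} has real roots; nonvanishing of the denominator in \eqref{snewa} will follow from the sign analysis below.

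For the bounds I would work with the normalized polynomial $P(\alpha):=(\phi_1/\phi_3)\alpha^2-(\phi_2/\phi_3)\alpha+1$, whose zeros are exactly $\alpha_k^{new}$ and its conjugate root, and with $\alpha_k^{new}=2\big/\big(\tfrac{\phi_2}{\phi_3}+\sqrt{(\tfrac{\phi_2}{\phi_3})^2-4\tfrac{\phi_1}{\phi_3}}\big)$. Using the Theorem~\ref{thnbb1} formulas one checks the identities
\[
P(\alpha_{k}^{BB2})=\frac{(\alpha_{k-1}^{BB2}-\alpha_{k}^{BB2})(\alpha_{k}^{BB2}-\alpha_{k}^{BB1})}{\alpha_{k-1}^{BB2}(\alpha_{k-1}^{BB1}-\alpha_{k}^{BB1})},\qquad
P(\alpha_{k-1}^{BB2})=\frac{(\alpha_{k-1}^{BB2}-\alpha_{k}^{BB2})(\alpha_{k-1}^{BB2}-\alpha_{k-1}^{BB1})}{\alpha_{k}^{BB2}(\alpha_{k-1}^{BB1}-\alpha_{k}^{BB1})},
\]
and $P(\phi_3/\phi_2)=\phi_1\phi_3/\phi_2^2$. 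Since $\alpha_{k}^{BB2}-\alpha_{k}^{BB1}\le0$ and $\alpha_{k-1}^{BB2}-\alpha_{k-1}^{BB1}\le0$, both $P(\alpha_{k}^{BB2})$ and $P(\alpha_{k-1}^{BB2})$ have the sign of $-\operatorname{sign}(\phi_1/\phi_3)$, while $P(\phi_3/\phi_2)$ has the sign of $\operatorname{sign}(\phi_1/\phi_3)$. It is also useful that $\phi_1/\phi_3$ and $\phi_2/\phi_3$ carry the same sign whenever $\alpha_{k-1}^{BB2}-\alpha_{k}^{BB2}$ and $\alpha_{k-1}^{BB1}-\alpha_{k}^{BB1}$ do, since then $\alpha_{k-1}^{BB1}\alpha_{k-1}^{BB2}-\alpha_{k}^{BB1}\alpha_{k}^{BB2}$ inherits that sign.

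The two cases of the theorem then follow from where these three evaluation points sit relative to the roots of $P$. If $\phi_1/\phi_3\ge0$, then $\alpha_{k-1}^{BB2}-\alpha_{k}^{BB2}$ and $\alpha_{k-1}^{BB1}-\alpha_{k}^{BB1}$ agree in sign, forcing $\phi_2/\phi_3>0$, so $P$ opens upward with two positive roots and $\alpha_k^{new}$ is the smaller one (as in Theorem~\ref{thnbb1}); from $P(\alpha_{k}^{BB2})\le0$, $P(\alpha_{k-1}^{BB2})\le0$ both $\alpha_{k}^{BB2}$ and $\alpha_{k-1}^{BB2}$ lie between the roots, giving $\alpha_k^{new}\le\min\{\alpha_{k}^{BB2},\alpha_{k-1}^{BB2}\}$; and by Vieta $1/r_1+1/r_2=\phi_2/\phi_3$ with $r_1,r_2>0$, so $\phi_3/\phi_2=(1/r_1+1/r_2)^{-1}\le\min\{r_1,r_2\}=\alpha_k^{new}$. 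If $\phi_1/\phi_3<0$, then $P$ opens downward with one negative and one positive root, $\alpha_k^{new}$ is the positive one, and $P(\alpha_{k}^{BB2})\ge0$, $P(\alpha_{k-1}^{BB2})\ge0$ place $\alpha_{k}^{BB2},\alpha_{k-1}^{BB2}$ between the roots, so $\max\{\alpha_{k}^{BB2},\alpha_{k-1}^{BB2}\}\le\alpha_k^{new}$; for the upper bound, $P(\phi_3/\phi_2)<0$ puts $\phi_3/\phi_2$ outside the root interval, which when $\phi_2/\phi_3>0$ immediately gives $\alpha_k^{new}<\phi_3/\phi_2=|\phi_3/\phi_2|$, and when $\phi_2/\phi_3<0$ one instead uses the root relation $\alpha_k^{new}=\phi_3/(\phi_2-\phi_1\alpha_k^{new})$ together with the estimate $|\phi_2-\phi_1\alpha_k^{new}|\ge|\phi_2|$.

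The algebraic identities for $P(\alpha_{k}^{BB2})$, $P(\alpha_{k-1}^{BB2})$ and the discriminant-in-$u$ simplification are routine expansions, and the parabola-placement arguments are then immediate. I expect the genuine difficulty to lie in the sign bookkeeping when $\phi_1/\phi_3<0$: there $\phi_2/\phi_3$ may a priori be negative, and in that subcase the "root outside the interval" reasoning does not by itself deliver $\alpha_k^{new}\le|\phi_3/\phi_2|$; closing it requires the estimate $|\phi_2-\phi_1\alpha_k^{new}|\ge|\phi_2|$, which I would extract from the structural relations $\alpha_{k}^{BB2}\le\alpha_{k}^{BB1}$, $\alpha_{k-1}^{BB2}\le\alpha_{k-1}^{BB1}$ and $g_{k-1}=(I-\alpha_{k-2}A)g_{k-2}$, and this is the step I anticipate will demand the most care.
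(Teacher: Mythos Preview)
Your route is correct but genuinely different from the paper's. You evaluate the normalized quadratic $P(\alpha)=(\phi_1/\phi_3)\alpha^2-(\phi_2/\phi_3)\alpha+1$ at $\alpha_{k}^{BB2}$, $\alpha_{k-1}^{BB2}$, $\phi_3/\phi_2$ and locate these points relative to the roots, together with a separate discriminant-in-$u$ computation for well-definedness and a Vieta step for the lower bound. The paper instead rests everything on the single identity
\[
\frac{\phi_2}{\phi_3}=\frac{1}{\alpha_{j}^{BB2}}+\frac{\phi_1}{\phi_3}\,\alpha_{j}^{BB1},\qquad j\in\{k-1,k\},
\]
obtained by splitting the numerator of $\phi_2/\phi_3$ as $\alpha_{k-1}^{BB2}(\alpha_{k-1}^{BB1}-\alpha_k^{BB1})+\alpha_k^{BB1}(\alpha_{k-1}^{BB2}-\alpha_k^{BB2})$. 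Substituting this into the radicand and using $\alpha_j^{BB1}\ge\alpha_j^{BB2}$ yields, according to the sign of $\phi_1/\phi_3$, either $(\phi_2/\phi_3)^2-4\phi_1/\phi_3\ge\big(1/\alpha_j^{BB2}-(\phi_1/\phi_3)\alpha_j^{BB1}\big)^2$ or the reverse inequality, from which well-definedness and all four bounds drop out of the quadratic formula in one stroke. Your evaluation formula $P(\alpha_k^{BB2})=(\phi_1/\phi_3)\alpha_k^{BB2}(\alpha_k^{BB2}-\alpha_k^{BB1})$ is in fact an immediate consequence of this identity, so the paper's route is considerably shorter and avoids both your discriminant-in-$u$ calculation and the Vieta argument.

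On the subcase you flag ($\phi_1/\phi_3<0$ with $\phi_2/\phi_3<0$): your proposed inequality $|\phi_2-\phi_1\alpha_k^{new}|\ge|\phi_2|$ does not follow from the relations you cite, because here $\phi_1$ and $\phi_2$ share a sign and $\alpha_k^{new}>0$, so $\phi_1\alpha_k^{new}$ reduces $|\phi_2|$ rather than augments it; the inequality would force $\alpha_k^{new}\ge 2|\phi_2/\phi_1|$, which is not automatic. Note, however, that the paper's upper-bound step in this case also relies on $\sqrt{(\phi_2/\phi_3)^2-4\phi_1/\phi_3}\ge|\phi_2/\phi_3|$, which delivers $\alpha_k^{new}\le|\phi_3/\phi_2|$ only when $\phi_2/\phi_3>0$. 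So this is a shared lacuna rather than a defect particular to your plan.
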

\begin{proof}
	If $\phi_1/\phi_3<0$, we see from \eqref{snewa} that $\alpha_k^{new}$ is well defined. Now we consider the case that $\phi_1/\phi_3\geq0$. Notice that $\phi_2/\phi_3$ can be rewritten as
	\begin{align}\label{nbbphi232}
	\frac{\phi_2}{\phi_3}
	&=\frac{\alpha_{k-1}^{BB2}(\alpha_{k-1}^{BB1}-\alpha_{k}^{BB1})+\alpha_{k}^{BB1}(\alpha_{k-1}^{BB2}-\alpha_{k}^{BB2})}
	{\alpha_{k-1}^{BB2}\alpha_{k}^{BB2}(\alpha_{k-1}^{BB1}-\alpha_{k}^{BB1})}\nonumber\\
	&=\frac{1}{\alpha_{k}^{BB2}}+\frac{\phi_1}{\phi_3}\alpha_{k}^{BB1}.
	\end{align}
	Since $\alpha_{k}^{BB1}\ge \alpha_{k}^{BB2}$, by \eqref{nbbphi232} and \eqref{eqphinbb1}, we obtain
	\begin{align*}\label{nbbdlt}
	\left(\frac{\phi_2}{\phi_3}\right)^2
	&\geq\left(\frac{\phi_2}{\phi_3}\right)^2-4\,\frac{\phi_1}{\phi_3}\nonumber\\
	&=
	\frac{1}{(\alpha_{k}^{BB2})^2}+\left(\frac{\phi_1}{\phi_3}\right)^2(\alpha_{k}^{BB1})^2+
	\,2\frac{\phi_1}{\phi_3}\frac{\alpha_{k}^{BB1}}{\alpha_{k}^{BB2}}-4\,
	\frac{\phi_1}{\phi_3}\nonumber\\
	&\geq\frac{1}{(\alpha_{k}^{BB2})^2}+\left(\frac{\phi_1}{\phi_3}\right)^2(\alpha_{k}^{BB1})^2-
	2\,\frac{\phi_1}{\phi_3}\frac{\alpha_{k}^{BB1}}{\alpha_{k}^{BB2}}\nonumber\\
	&=\left(\frac{1}{\alpha_{k}^{BB2}}-\frac{\phi_1}{\phi_3}\alpha_{k}^{BB1}\right)^2,
	\end{align*}
	which with \eqref{snewa} indicates that $\alpha_k^{new}$ is well defined and
	\begin{equation}\label{snew1upbd1}
	\frac{\phi_3}{\phi_2}
	\leq\alpha_k^{new}\leq\min\left\{\alpha_{k}^{BB2},\,\frac{1}{\alpha_{k}^{BB1}}\frac{\phi_3}{\phi_1}\right\}
	\leq\alpha_{k}^{BB2}.
	\end{equation}
	Similarly to \eqref{nbbphi232}, we can get that
	\begin{equation}\label{nbbphi233}
	\frac{\phi_2}{\phi_3}=\frac{1}{\alpha_{k-1}^{BB2}}+\frac{\phi_1}{\phi_3}\alpha_{k-1}^{BB1}.
	\end{equation}
	Hence, we obtain
	\begin{equation}\label{snew1upbd3}
	\frac{\phi_3}{\phi_2}\leq\alpha_k^{new}\leq\min\left\{\alpha_{k-1}^{BB2},\,\frac{1}{\alpha_{k-1}^{BB1}}\frac{\phi_3}{\phi_1}\right\}
	\leq\alpha_{k-1}^{BB2}.
	\end{equation}
	Combining \eqref{snew1upbd1} and \eqref{snew1upbd3} yields the desired inequalities \eqref{nbb1lbd}. This completes the proof of \eqref{nbb1lbd}.
	
			To prove \eqref{nbb1lbd2}, by $\phi_1/\phi_3<0$ and \eqref{nbbphi232}, we have that
			\begin{align*}
			\left(\frac{\phi_2}{\phi_3}\right)^2
			&\leq\left(\frac{\phi_2}{\phi_3}\right)^2-4\,\frac{\phi_1}{\phi_3}\leq\left(\frac{1}{\alpha_{k}^{BB2}}-\frac{\phi_1}{\phi_3}\alpha_{k}^{BB1}\right)^2,
			\end{align*}
			which with the fact $\alpha_{k}^{BB1}\geq0$ implies that $\alpha_{k}^{BB2}\leq\alpha_k^{new}\leq\left|\phi_3/\phi_2\right|$.
			Similarly, by \eqref{nbbphi233}, we can obtain $\alpha_{k-1}^{BB2}\leq\alpha_k^{new}\leq\left|\phi_3/\phi_2\right|$.
			Then the relation \eqref{nbb1lbd2} follows immediately.
\end{proof}

As will be seen in the next section, the above theorem stimulates us to provide a new choice for the short stepsize in the algorithmic design of the gradient method. Finally, we give an asymptotic spectral property of $\alpha_k^{new}$ within the SD method.
\begin{theorem}\label{thnbbspectral}
When applying the SD method to $n$-dimensional unconstrained quadratic problem \eqref{eqpro}, we have that $\lim_{k\rightarrow\infty}\alpha_k^{new}=1/\lambda_n$.
\end{theorem}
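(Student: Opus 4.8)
The plan is to pass to the limit in the formula \eqref{snewa}--\eqref{eqphinbb1}, which expresses $\alpha_k^{new}$ solely through BB stepsizes, and to feed in the classical asymptotic behaviour of steepest descent. I claim it suffices to prove that $\phi_1/\phi_3\to\lambda_1\lambda_n$ and $\phi_2/\phi_3\to\lambda_1+\lambda_n$ as $k\to\infty$. Indeed the limiting quadratic $\lambda_1\lambda_n\,\alpha^2-(\lambda_1+\lambda_n)\alpha+1=0$ factors as $(\lambda_1\alpha-1)(\lambda_n\alpha-1)=0$ with roots $1/\lambda_1$ and $1/\lambda_n$; since $\alpha_k^{new}$ is by construction (Theorem \ref{thnbb1}) the smaller root of $\phi_1\alpha^2-\phi_2\alpha+\phi_3=0$, and the coefficients converge with limiting discriminant $(\phi_2/\phi_3)^2-4\phi_1/\phi_3\to(\lambda_1-\lambda_n)^2>0$, continuity of the roots of a quadratic forces $\alpha_k^{new}\to\min\{1/\lambda_1,1/\lambda_n\}=1/\lambda_n$. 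Conceptually this is exactly the two dimensional picture of Theorem \ref{thnqft}: the equation \eqref{eqtodnbb} has roots $1/\lambda_1,1/\lambda_n$ whenever the iteration effectively lives in a two dimensional invariant subspace associated with $\lambda_1$ and $\lambda_n$, and steepest descent asymptotically does so.

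First I would record that for the SD method $s_{k-1}=-\alpha_{k-1}^{SD}g_{k-1}$ and $y_{k-1}=-\alpha_{k-1}^{SD}Ag_{k-1}$, hence $\alpha_k^{BB1}=\alpha_{k-1}^{SD}$ and $\alpha_k^{BB2}=\beta_{k-1}$ with $\beta_j:=g_j\tr Ag_j/(g_j\tr A^2g_j)$, so that \eqref{eqphinbb1} only involves $\alpha_{k-2}^{SD},\alpha_{k-1}^{SD},\beta_{k-2},\beta_{k-1}$. Then I would invoke the Akaike--Forsythe analysis \cite{akaike1959successive,forsythe1968asymptotic}: after diagonalising $A$, the normalised SD gradient converges to the subspace spanned by the unit eigenvectors $v_1,v_n$ associated with $\lambda_1$ and $\lambda_n$, so asymptotically $g_k$ behaves like $a_kv_1+b_kv_n$. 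Setting $t_k=b_k^2/a_k^2$ and using $g_{k+1}=(I-\alpha_k^{SD}A)g_k$ from \eqref{eqitr} together with $\alpha_k^{SD}=(1+t_k)/(\lambda_1+\lambda_nt_k)$, one gets $1-\alpha_k^{SD}\lambda_1=(\lambda_n-\lambda_1)t_k/(\lambda_1+\lambda_nt_k)$ and $1-\alpha_k^{SD}\lambda_n=(\lambda_1-\lambda_n)/(\lambda_1+\lambda_nt_k)$, whence $t_{k+1}=[(1-\alpha_k^{SD}\lambda_n)/(1-\alpha_k^{SD}\lambda_1)]^2\,t_k=1/t_k$ asymptotically; also $\beta_j=(\lambda_1+\lambda_nt_j)/(\lambda_1^2+\lambda_n^2t_j)$.

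Now I would substitute, writing $s=t_{k-2}$ so that $t_{k-1}\approx1/s$. An elementary computation shows that $\beta_{k-2}-\beta_{k-1}$ carries a factor $(1-s^2)$ and $\alpha_{k-2}^{SD}-\alpha_{k-1}^{SD}$ a factor $(s^2-1)$, and that all remaining polynomial factors in $\phi_1/\phi_3=(\beta_{k-2}-\beta_{k-1})/[\beta_{k-2}\beta_{k-1}(\alpha_{k-2}^{SD}-\alpha_{k-1}^{SD})]$ cancel, leaving $\phi_1/\phi_3\to\lambda_1\lambda_n$ independently of $s$. For $\phi_2/\phi_3$ I would use the identity \eqref{nbbphi232}, namely $\phi_2/\phi_3=1/\alpha_k^{BB2}+(\phi_1/\phi_3)\alpha_k^{BB1}=1/\beta_{k-1}+(\phi_1/\phi_3)\alpha_{k-1}^{SD}$; substituting $1/\beta_{k-1}=(\lambda_1^2+\lambda_n^2t_{k-1})/(\lambda_1+\lambda_nt_{k-1})$, $\alpha_{k-1}^{SD}=(1+t_{k-1})/(\lambda_1+\lambda_nt_{k-1})$ and $\phi_1/\phi_3\to\lambda_1\lambda_n$ gives $\phi_2/\phi_3\to(\lambda_1+\lambda_n)(\lambda_1+\lambda_nt_{k-1})/(\lambda_1+\lambda_nt_{k-1})=\lambda_1+\lambda_n$, again independently of the limit of $t_{k-1}$. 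This proves the two claimed limits and hence the theorem.

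The step I expect to be the main obstacle is making the Akaike--Forsythe concentration quantitative enough: $\phi_1/\phi_3$ and $\phi_2/\phi_3$ are ratios of quadratic forms in $g_{k-2},g_{k-1},g_k$, and because $g_{k-2}$ and $g_k$ lag behind $g_{k-1}$, the contributions of the intermediate eigenvectors are propagated through factors $\prod_j(I-\alpha_j^{SD}A)$ that must be bounded away from $\lambda_1$ and $\lambda_n$ in order to justify replacing the $g_j$ by their idealised projections $a_jv_1+b_jv_n$. A secondary point to be addressed is the degenerate case $t_k\to1$ (equivalently, the SD stepsizes converging rather than forming a two-cycle), where $\beta_{k-2}-\beta_{k-1}$ and $\alpha_{k-2}^{SD}-\alpha_{k-1}^{SD}$ both tend to zero and $\phi_1/\phi_3,\phi_2/\phi_3$ become $0/0$; this configuration is non-generic and is dealt with by a refined expansion of $t_k$ near its fixed point.
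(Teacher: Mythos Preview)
Your proposal is correct and follows essentially the same route as the paper: compute $\lim\phi_1/\phi_3=\lambda_1\lambda_n$ and $\lim\phi_2/\phi_3=\lambda_1+\lambda_n$ from the asymptotic behaviour of the SD iterates, then read off the smaller root. The only difference is packaging: the paper does not re-derive the Akaike--Forsythe two-cycle but simply cites Theorem~1 of \cite{huang2019asymptotic}, which already records the limits of $\alpha_{2k}^{BB1},\alpha_{2k+1}^{BB1},\alpha_{2k}^{BB2},\alpha_{2k+1}^{BB2}$ along SD (in terms of a constant $c$ and $\kappa=\lambda_n/\lambda_1$), and then plugs these four numbers into \eqref{eqphinbb1}. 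This directly dissolves the two ``obstacles'' you flag. First, the quantitative concentration of $g_k$ onto $\mathrm{span}\{v_1,v_n\}$ is exactly what that cited theorem encapsulates, so you need not re-prove it. Second, your worry about the degenerate case $t_k\to1$ is in fact moot: your own calculation shows that in the two-dimensional idealisation the ratios $\phi_1/\phi_3$ and $\phi_2/\phi_3$ equal $\lambda_1\lambda_n$ and $\lambda_1+\lambda_n$ \emph{identically} in $s$ (the $(1-s^2)$ factors cancel before taking any limit), so there is no genuine $0/0$ indeterminacy at $s=1$; only the transient contributions from intermediate eigenvectors need to be controlled, and that is again what \cite{huang2019asymptotic} (equivalently \cite{akaike1959successive,forsythe1968asymptotic}) supplies.
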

\begin{proof}
By Theorem 1 of \cite{huang2019asymptotic}, when applying the SD method to problem \eqref{eqpro}, we obtain
	\begin{equation*}
	\begin{array}{rcl}
	\lim\limits_{k\rightarrow\infty}\alpha_{2k}^{BB1}&=&\lim\limits_{k\rightarrow\infty}\alpha_{2k-1}^{SD}=
	\dfrac{1+c^2}{\lambda_1(\kappa+c^2)}, \\[2mm]
	\lim\limits_{k\rightarrow\infty}\alpha_{2k+1}^{BB1}&=&\lim\limits_{k\rightarrow\infty}\alpha_{2k}^{SD}=
	\dfrac{1+c^2}{\lambda_1(1+c^2\kappa)}, \\[2mm]
	\lim\limits_{k\rightarrow\infty}\alpha_{2k}^{BB2}&=&
	\dfrac{\kappa+c^2}{\lambda_1(\kappa^2+c^2)}, \\[2mm]
	\lim\limits_{k\rightarrow\infty}\alpha_{2k+1}^{BB2}&=&
	\dfrac{1+c^2\kappa}{\lambda_1(1+c^2\kappa^2)}, \\
	\end{array} \end{equation*}
	where $\kappa=\lambda_n/\lambda_1$ is the condition number of $A$ and $c$ is a constant. It follows from  \eqref{snewa} and \eqref{eqphinbb1} that
	\begin{equation*}
	\begin{array}{rcl}
	\lim\limits_{k\rightarrow\infty}\dfrac{\phi_1}{\phi_3}&=&\lambda_1\lambda_n, \\[2mm]
	\lim\limits_{k\rightarrow\infty}\dfrac{\phi_2}{\phi_3}&=&\lambda_1+\lambda_n, \\
	\end{array} \end{equation*}
	which yield $\lim_{k\rightarrow\infty}\alpha_k^{new}=1/\lambda_n$. This completes our proof.
\end{proof}

\section{A new gradient method for unconstrained optimization}\label{alqpunc}
In this section, by making use of the new stepsize $\alpha_k^{new}$, we develop an efficient gradient method for solving unconstrained optimization problems.

\subsection{Quadratic optimization}
To begin with, we consider the quadratic optimization \eqref{eqpro} again, which is often used for constructing and analyzing optimization methods and plays an important role in nonlinear optimization.

Extensive studies show that using the long BB stepsize $\alpha_{k}^{BB1}$ (since $\alpha_{k}^{BB1}\geq\alpha_{k}^{BB2}$) and some short stepsize in an alternate or adaptive manner is numerically better than the original BB method, see for example  \cite{Serena2020,dai2003alternate,dai2005projected,dai2006cyclic,frassoldati2008new,friedlander1998gradient,hdlz2019,raydan2002relaxed,zhou2006gradient}.
If $\phi_1/\phi_3\geq0$ and $\phi_2\neq0$, we know from Theorem \ref{thnbb1upbd} that the new stepsize
$\alpha_k^{new}$ is shorter than the two short stepsizes $\alpha_{k}^{BB2}$ and $\alpha_{k-1}^{BB2}$.
On the other hand, if $\phi_1/\phi_3<0$ and $\phi_2\neq0$, the stepsize $\alpha_k^{new}$ is longer than both
$\alpha_{k}^{BB2}$ and $\alpha_{k-1}^{BB2}$.  Combining the two cases, we shall replace $\alpha_k^{new}$ by $\min\{\alpha_{k-1}^{BB2},\alpha_{k}^{BB2},\alpha_k^{new}\}$, which is the shortest stepsize among $\alpha_{k-1}^{BB2}$, $\alpha_{k}^{BB2}$ and $\alpha_k^{new}$, in the algorithmic design of new gradient methods.

Motivated by the success of adaptive schemes \cite{Bonettini2008,zhou2006gradient}, we now suggest the  gradient method \eqref{eqitr} with the stepsizes $\{\alpha_k: k\ge 3\}$ given by
\begin{equation}\label{snbbmadp}
\alpha_k=\left\{
\begin{array}{ll}
\min\{\alpha_{k-1}^{BB2},\alpha_{k}^{BB2},\alpha_k^{new}\}, & \hbox{if $\alpha_k^{BB2}/\alpha_k^{BB1}<\tau_k$;} \\
\alpha_k^{BB1}, & \hbox{otherwise,}
\end{array}
\right.
\end{equation}
where $\tau_k>0$. In addition, we take $\alpha_1=\alpha_1^{SD}$ and $\alpha_2=\alpha_2^{BB1}$ for quadratic optimization. The simplest way to update $\tau_k$ in \eqref{snbbmadp} is setting it to some constant $\tau\in(0,1)$
for all $k$ (see \cite{zhou2006gradient}). For this fixed scheme, the performance of the method \eqref{snbbmadp} may heavily depend on the value of $\tau$. Another strategy is to update $\tau_k$ dynamically by
\begin{equation} \label{tauk}
\tau_{k+1}=\left\{
\begin{array}{ll}
\tau_k/\gamma, & \hbox{ if $\alpha_k^{BB2}/\alpha_k^{BB1}<\tau_k$;} \\
\tau_k\gamma, & \hbox{ otherwise,}
\end{array}
\right.
\end{equation}
for some $\gamma>1$, see \cite{Bonettini2008} for example. In what follows, we will present an intuitive comparison between the fixed and dynamic schemes.

We tested the new method  \eqref{snbbmadp} on some randomly generated quadratic problems \cite{yuan2006new}. The objective function is given by
\begin{equation}\label{quad-test1}
f(x)=(x-x^*) \tr \textrm{diag}\{v_1,\ldots,v_n\}(x-x^*),
\end{equation}
where $x^*$ was randomly generated with components in $[-10,10]$,
$v_1=1$, $v_n=\kappa$, and $v_j$, $j=2,\ldots,n-1$, were generated between $1$ and $\kappa$ by the \emph{rand} function in Matlab. The null vector was employed as the starting point.  We terminated the method if either the number of iteration exceeds $20000$ or
$\|g_k\|_2\leq\epsilon\|g_1\|_2$,
where $\epsilon$ is a given tolerance.  For each problem, we tested three different values of tolerances $\epsilon=10^{-6}$, $10^{-9}$, $10^{-12}$ and condition numbers $\kappa=10^4$, $10^5$, $10^6$. We randomly generated $10$ instances of the problem for each value of $\kappa$ or $\epsilon$.

\begin{table}[htp!b]  	
	\setlength{\tabcolsep}{0.7ex}                                                          	
	\caption{The average number of iterations required by the method  \eqref{snbbmadp} with fixed and dynamic schemes on  quadratic problem \eqref{quad-test1}.}\label{tbnbbrand1} 	
	\footnotesize
	\centering      	
	\begin{tabular}{ccccccccccccc}                                                   		
		\hline                                                               		
		\multicolumn{1}{c}{\multirow{2}{*}{$n$}}		
		&\multicolumn{1}{c}{\multirow{2}{*}{$\epsilon$}}		
		&\multicolumn{5}{c}{\multirow{1}{*}{Fixed scheme ($\tau$)}} 		
		&\multicolumn{1}{c}{}		
		&\multicolumn{5}{c}{\multirow{1}{*}{Dynamic scheme ($\tau_1$)}} \\ 		
		\cline{3-7} \cline{9-13}   		
		& & 0.1  & 0.2  & 0.5  & 0.7 & 0.9  & & 0.1  & 0.2  & 0.5  & 0.7 & 0.9 \\
		
		\hline
		
		
		\multicolumn{1}{c}{\multirow{3}{*}{1000}}
		&1e-06 & 211.4  & 202.2  & 198.7  & 194.8  & 194.4  &	
& 203.6  & 194.3  & 198.1  & 202.4  & 195.7  \\
		&1e-09 & 616.2  & 688.7  & 767.0  & 888.0  &1097.5  &
& 563.3  & 548.7  & 598.5  & 566.4  & 576.9  \\
		&1e-12 & 898.0  & 954.0  &1100.0  &1127.8  &1824.8  &
& 811.9  & 813.6  & 845.7  & 830.9  & 831.1  \\
		
		\hline
		
		\multicolumn{1}{c}{\multirow{3}{*}{10000}}                                                 	
&1e-06 & 263.2  & 261.3  & 232.8  & 233.9  & 244.9  &	& 251.1  & 245.6  & 250.1  & 250.0  & 243.3  \\
&1e-09 &1223.9  &1188.0  &1520.6  &1537.7  &1925.6  & &1092.3  &1206.6  &1196.4  &1240.6  &1229.3  \\
&1e-12 &1954.9  &1793.6  &2085.1  &2198.6  &3230.3  & &1949.5  &1932.3  &2034.2  &2055.1  &2055.2  \\

\hline 	
		           	
	\end{tabular}                                                               	
\end{table}

For the fixed scheme, the parameter $\tau$ varies from $0.1$, $0.2$, $0.5$, $0.7$, $0.9$. The dynamic scheme also uses these values for $\tau_1$ and takes the value $\gamma=1.02$. Table \ref{tbnbbrand1} presents the average numbers of iterations of the two schemes. Clearly, when $n=1000$, the dynamic scheme outperforms the fixed scheme for most values of $\tau_1$. When $n=10000$, the dynamic scheme is better than or comparable to the fixed one. Moreover, performance of the dynamic scheme is less dependent on the value of $\tau_1$. Hence, in what follows, we will concentrate on the dynamic scheme.

For the quadratic optimization \eqref{eqpro}, the $R$-linear global convergence of the new method \eqref{snbbmadp} can easily be  established by using the property in \cite{dai2003alternate}. See the proof of Theorem~3 in \cite{dhl2018} for example.


To further illustrate the efficiency of the new method \eqref{snbbmadp} for quadratic optimization, we compared it with the following gradient methods:
\begin{itemize}
	\item[(i)] BB1 \cite{Barzilai1988two}: the original BB method using $\alpha_k^{BB1}$;
	
	\item[(ii)] DY \cite{dai2005analysis}: the Dai-Yuan monotone gradient method;
	
	\item[(iii)] ABB \cite{zhou2006gradient}: the adaptive BB method;
	
	\item[(iv)] ABBmin1 \cite{frassoldati2008new}: a gradient method adaptively uses $\alpha_k^{BB1}$ and  $\min\{\alpha_j^{BB2}:~j=\max\{1,k-m\},\dots,k\}$;
	
	\item[(v)] ABBmin2 \cite{frassoldati2008new}: a gradient method adaptively uses $\alpha_k^{BB1}$ and a short stepsize in \cite{frassoldati2008new};
	
	\item[(vi)] SDC \cite{de2014efficient}: a gradient method takes $h$ SD iterates followed by $s$ steps with the same $\alpha_k^{DY}$.
	
\end{itemize}

Firstly, we tested the methods on the problem \eqref{quad-test1}. We employed the same settings for initial points, condition numbers and tolerances as before. Five different distributions of the diagonal elements $v_j$, $j=2,\ldots,n-1$, were generated (see Table \ref{tbspe}). The problem dimension was set to $n=10000$. For the SDC method, the parameter pair $(h,s)$ was set to $(8,6)$, which is more efficient than other choices. As suggested in \cite{frassoldati2008new}, $\tau=0.15$, $0.8$, $0.9$ were used for the ABB, ABBmin1, ABBmin2 methods, respectively, whereas $m=9$ was employed for the ABBmin1 method.

Figure \ref{frandp} presents performance profiles \cite{dolan2002} obtained by the new method \eqref{snbbmadp} with $\tau_1=0.2$, $\gamma=1.02$ and other methods using the average number of iterations as the metric.
For each method, the vertical axis of the figure shows the percentage of problems the method solves within the factor $\rho$ of the minimum value of the metric.
It can be seen that the new method \eqref{snbbmadp} clearly outperforms other compared methods.

%
\begin{table}[ht!b]
	{\footnotesize
		\caption{Distributions of $\{v_j:\,j=2,\ldots,n-1\}$.}\label{tbspe}
		\begin{center}
			\begin{tabular}{|c|c|}
				\hline
				\multirow{1}{*}{Set} &\multicolumn{1}{c|}{Spectrum} \\
				\hline
				\multirow{1}{*}{1} &$\{v_2,\ldots,v_{n-1}\}\subset(1,\kappa)$	\\
				\hline
				\multirow{2}{*}{2}
				&$\{v_2,\ldots,v_{n/5}\}\subset(1,100)$	\\
				&$\{v_{n/5+1},\ldots,v_{n-1}\}\subset(\frac{\kappa}{2},\kappa)$	\\
				\hline
				
				\multirow{2}{*}{3}
				&$\{v_2,\ldots,v_{n/2}\}\subset(1,100)$	\\
				&$\{v_{n/2+1},\ldots,v_{n-1}\}\subset(\frac{\kappa}{2},\kappa)$	\\
				\hline
				\multirow{2}{*}{4}
				&$\{v_2,\ldots,v_{4n/5}\}\subset(1,100)$	\\
				&$\{v_{4n/5+1},\ldots,v_{n-1}\}\subset(\frac{\kappa}{2},\kappa)$	\\
				\hline
				\multirow{3}{*}{5}
				&$\{v_2,\ldots,v_{n/5}\}\subset(1,100)$	\\
				&$\{v_{n/5+1},\ldots,v_{4n/5}\}\subset(100,\frac{\kappa}{2})$	\\
				&$\{v_{4n/5+1},\ldots,v_{n-1}\}\subset(\frac{\kappa}{2},\kappa)$	\\
				\hline
			\end{tabular}
		\end{center}
	}
\end{table}

\begin{figure}[ht!b]
	\centering
	\includegraphics[width=0.7\textwidth,height=0.5\textwidth]{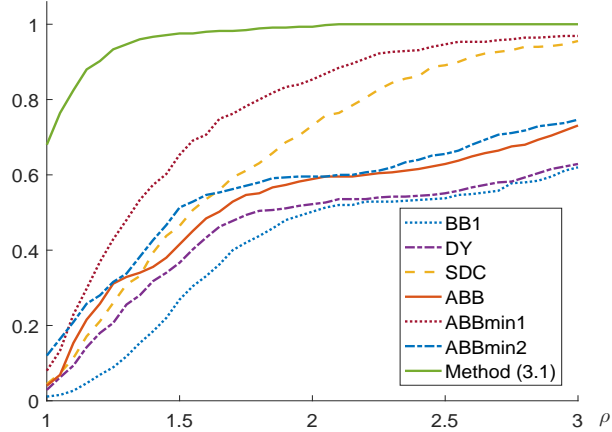}
	\caption{Performance profiles of method \eqref{snbbmadp}, BB1, DY, SDC, ABB, ABBmin1 and ABBmin2 on random quadratic problem \eqref{quad-test1} with spectral distributions in Table \ref{tbspe}, iteration metric.}\label{frandp}
\end{figure}

\begin{table}[htp!b]
	\setlength{\tabcolsep}{1ex}
	\caption{The average  number of iterations required by the method \eqref{snbbmadp}, the BB1, DY, SDC, ABB, ABBmin1 and ABBmin2 methods on quadratic problem \eqref{quad-test1}
		with spectral distributions in Table \ref{tbspe}.}\label{tbnbbrand}
	\centering
	\footnotesize
	\begin{tabular}{|c|c|c|c|c|c|c|c|c|}
		\hline
		\multicolumn{1}{|c|}{\multirow{1}{*}{set}} &\multicolumn{1}{c|}{\multirow{1}{*}{$\epsilon$}}
		&\multirow{1}{*}{BB1} &\multirow{1}{*}{DY} &\multirow{1}{*}{SDC} &\multirow{1}{*}{ABB}  &\multirow{1}{*}{ABBmin1} &\multirow{1}{*}{ABBmin2}
		&\multicolumn{1}{c|}{Method \eqref{snbbmadp}} \\
		\hline
		\multicolumn{1}{|c|}{\multirow{3}{*}{1}}
		& 1e-06   & 282.0  & 249.8  & \textbf{241.6}  & 256.1  & 268.1  & 306.5 & 247.9\\
		& 1e-09   &2849.6  &2598.7  &2069.6  &1322.4  &1864.2  &\textbf{1034.6} &1153.4\\
		& 1e-12   &5951.2  &6101.7  &3943.1  &1919.3  &4055.9  &\textbf{1436.3} &1970.0\\
		\hline
		
		\multicolumn{1}{|c|}{\multirow{3}{*}{2}}
		& 1e-06   & 348.3  & 277.3  & 182.2  & 273.7  & 146.6  & 273.5 & \textbf{105.7}\\
		& 1e-09   &1598.4  &1426.4  & 722.3  &1541.6  & 578.3  &1377.8 & \textbf{399.8}\\
		& 1e-12   &2848.4  &2676.6  &1309.2  &2782.8  & 974.9  &2223.5 & \textbf{666.6}\\
		\hline
		
		\multicolumn{1}{|c|}{\multirow{3}{*}{3}}
		& 1e-06   & 401.6  & 318.3  & 200.0  & 371.8  & 192.8  & 387.7 & \textbf{132.7}\\
		& 1e-09   &1850.3  &1495.6  & 783.5  &1623.1  & 615.9  &1445.5 & \textbf{418.5}\\
		& 1e-12   &3162.7  &2667.8  &1296.1  &2872.9  &1052.7  &2385.6 & \textbf{680.2}\\
		\hline
		
		\multicolumn{1}{|c|}{\multirow{3}{*}{4}}
		& 1e-06   & 498.2  & 456.1  & 264.0  & 443.4  & 214.7  & 503.5 & \textbf{152.3}\\
		& 1e-09   &1889.3  &1690.0  & 895.2  &1831.7  & 669.6  &1604.7 & \textbf{444.4}\\
		& 1e-12   &3037.2  &2827.8  &1382.9  &3005.0  &1105.7  &2458.5 & \textbf{704.8}\\
		\hline
		
		\multicolumn{1}{|c|}{\multirow{3}{*}{5}}
		& 1e-06   & 827.0  & 653.3  & 677.2  & 708.5  & 700.1  & 912.6 &\textbf{ 641.8}\\
		& 1e-09   &4006.8  &3770.8  &3358.5  &3151.8  &3079.7  &3232.2 &\textbf{2702.6}\\
		& 1e-12   &7549.6  &7649.3  &5749.2  &5186.1  &5350.0  &5180.8 &\textbf{4678.5}\\
		\hline
		
		\multicolumn{1}{|c|}{\multirow{3}{*}{total}}
		&1e-06   &2357.1 &1954.8 &1565.0 &2053.5 &1522.3 &2383.8 &\textbf{1280.4}       \\
		&1e-09    &12194.4 &10981.5 &7829.1 &9470.6 &6807.7 &8694.8 &\textbf{5118.7}    \\
		&1e-12    &22549.1 &21923.2 &13680.5 &15766.1 &12539.2 &13684.7 &\textbf{8700.1}\\
		
		\hline
		
	\end{tabular}
\end{table}

Table \ref{tbnbbrand} presents the average number of iterations required by the compared methods to meet given tolerances. We see that, for the first problem set, the new method \eqref{snbbmadp} is faster than the BB1, DY, SDC, ABB, and ABBmin1 methods, and is comparable to ABBmin2. For the second to fifth problem sets, the new method performs much better than all the others. In addition, for each value of $\epsilon$, the new method always outperforms other methods in the sense of total number of iterations.

Furthermore, we compared the above methods for the non-rand quadratic problem \cite{de2014efficient} with $A$ being a diagonal matrix given by
\begin{equation}\label{pro2}
A_{jj}=10^{\frac{\log_{10} \kappa}{n-1}(n-j)}, ~~~j=1,\ldots,n,
\end{equation}
and $b=0$. The problem dimensional was also set as $n=10000$. The setting of the parameters is the same as above except the pair $(h,s)$ used for the SDC method was set to $(30,\,2)$, which sounds to provide better performance than some other choices.

The average numbers of iterations over 10 different starting points with entries randomly generated in $[-10,10]$ are presented in Table \ref{tbqp}. From Table \ref{tbqp}, we can see that the method \eqref{snbbmadp} again outperforms the others for most of the problems.

\begin{table}[htp!b]
	\setlength{\tabcolsep}{1.ex}
	\caption{The average  number of iterations required by the method \eqref{snbbmadp}, the BB1, DY, SDC, ABB, ABBmin1 and ABBmin2 methods on problem \eqref{pro2} with $n=10000$.}\label{tbqp}
	\centering
	\begin{footnotesize}
		\begin{tabular}{|c|c|c|c|c|c|c|c|c|}
			\hline
			\multicolumn{1}{|c|}{\multirow{1}{*}{$\kappa$}} &\multicolumn{1}{c|}{\multirow{1}{*}{$\epsilon$}}
			&\multirow{1}{*}{BB1} &\multirow{1}{*}{DY}  &\multirow{1}{*}{SDC} &\multirow{1}{*}{ABB} &\multirow{1}{*}{ABBmin1} &\multirow{1}{*}{ABBmin2} &\multicolumn{1}{c|}{Method \eqref{snbbmadp}} \\
			\hline
			\multicolumn{1}{|c|}{\multirow{3}{*}{$10^4$}}
			& 1e-06   & 606.4      & 496.0      & 539.1      & 533.8      & 567.9      & 516.8   &\textbf{483.3}  \\
			& 1e-09   &1192.0      & 954.1      &1026.4      & 930.7      & 978.2      &\textbf{894.4}   & 951.9  \\
			& 1e-12   &1697.5      &1352.7      &1438.1      &1318.1      &1415.0      &\textbf{1310.6}   &1340.0  \\
			\hline
			
			\multicolumn{1}{|c|}{\multirow{3}{*}{$10^5$}}
			& 1e-06  &1476.9      &1254.4      &1204.6      &1288.8      &1198.5      &1243.5   &\textbf{1153.3}  \\
			& 1e-09  &3420.8      &3058.3      &2713.4      &2756.7      &2661.1      &2549.0   &\textbf{2580.3}  \\
			& 1e-12  &5532.7      &4871.3      &4163.6      &4024.4      &4027.6      &3948.5   &\textbf{3770.0}  \\
			\hline
			
			\multicolumn{1}{|c|}{\multirow{3}{*}{$10^6$}}
			& 1e-06  &2766.8      &2108.0      &1972.0      &2123.4      &2081.3      &3327.9   &\textbf{1903.0}  \\
			& 1e-09  &12792.1     &10719.3     &7049.2      &7889.9      &7956.8      &8057.4   &\textbf{6832.4}  \\
			& 1e-12  &18472.4     &18360.3     &11476.8     &12730.4     &12435.1     &12293.0  &\textbf{10999.2}  \\
			\hline
			
			\multicolumn{1}{|c|}{\multirow{3}{*}{total}}
			& 1e-06   &4850.1 &3858.4 &3715.7 &3946.0 &3847.7 &5088.2 &\textbf{3539.6}         \\
			& 1e-09   &17404.9 &14731.7 &10789.0 &11577.3 &11596.1 &11500.8 &\textbf{10364.6} \\
			& 1e-12   &25702.6 &24584.3 &17078.5 &18072.9 &17877.7 &17552.1 &\textbf{16109.2} \\
			\hline
		\end{tabular}
	\end{footnotesize}
\end{table}

\subsection{Unconstrained optimization}
To extend the new method \eqref{snbbmadp} for minimizing a general smooth function $f(x)$,
\begin{equation} \min_{x\in \mathbb{R}^n} \, f(x), \end{equation}
we usually need to incorporate some line search to ensure global convergence. As pointed out by Fletcher \cite{fletcher2005barzilai}, one important feature of BB-like methods is the inherent nonmonotone property. Some nonmonotone line search is often employed to gain good numerical performance \cite{dai2005projected,dai2001adaptive,raydan1997barzilai,zhang2004nonmonotone}. Here we would like to adopt the Grippo-Lampariello-Lucidi (GLL) nonmonotone line search \cite{grippo1986nonmonotone}, which accepts  $\lambda_{k}$ when it satisfies
\begin{equation} \label{nonmls}
f(x_{k}+\lambda_{k}d_{k})\leq f_{r}+\sigma\lambda_{k}g_{k} \tr d_{k},
\end{equation}
where $\sigma$ is a positive parameter and the reference value $f_{r}$ is the maximal function value in recentest available $M$ iterations; namely,
$f_{r}=\max_{0\leq i\leq \min\{k,M-1\}} f(x_{k-i})$. This strategy was firstly incorporated for unconstrained optimization in the  global BB (GBB) method by Raydan \cite{raydan1997barzilai} and performs well in practice.

The combination of the gradient method with the stepsize formula \eqref{snbbmadp} and the GLL nonmonotone line search yields a new algorithm, Algorithm \ref{alunc}, for unconstrained optimization.  Under standard assumptions, global convergence of Algorithm \ref{alunc} can similarly be established and the convergence rate is $R$-linear for strongly convex objective functions, see \cite{huang2015rate} for example.

\begin{algorithm}[ht!b]
	\caption{A gradient method for unconstrained optimization}\label{alunc}
	\begin{algorithmic}
		\STATE{Initialization: $x_{1}\in \mathbb{R}^n$, $\alpha_{\max}\geq\alpha_{\min}>0$, $\alpha_{1}\in[\alpha_{\min},\alpha_{\max}]$,  $\tau_{1}>0$, $\gamma>1$, $\epsilon, \sigma, \delta\in(0,1)$, $M\in\mathbb{N}$, $k:=1$. }
		
		\WHILE{$\|g_{k}\|_{\infty}>\epsilon$}
		\STATE{$d_{k}=-g_{k}$, $\lambda_{k}=\alpha_k$}
		\STATE{$f_{r}=\max_{0\leq i\leq \min\{k,M-1\}} f(x_{k-i})$}
		\WHILE{the condition \eqref{nonmls} does not hold}
		\STATE{$\lambda_{k}=\delta\lambda_{k}$}
		\ENDWHILE
		\STATE{$x_{k+1}=x_{k}+\lambda_{k}d_{k}$}
		\IF{$s_{k} \tr y_{k}>0$}
		\IF{ $\alpha_k^{BB2}/\alpha_k^{BB1}<\tau_k$ and $s_{k-1} \tr y_{k-1}>0$}
		\IF{ $\alpha_{k+1}^{new}>0$}
		\STATE{${\alpha}_{k+1}=\min\{\alpha_{k}^{BB2}, \alpha_{k+1}^{BB2}, \alpha_{k+1}^{new}\}$}
		\ELSE	
		\STATE{$\alpha_{k+1} =\min\{\alpha_{k}^{BB2}, \alpha_{k+1}^{BB2}\}$}
		\ENDIF
		\STATE{$\tau_{k+1}=\tau_k/\gamma$}
		\ELSE	
		\STATE{$\alpha_{k+1} = \alpha_{k+1}^{BB1}$}	
		\STATE{$\tau_{k+1}=\tau_k\gamma$}
		\ENDIF
		\ELSE	
		\STATE{$\alpha_{k+1} = \min\{1/\|g_{k}\|_\infty,\|x_{k}\|_\infty/\|g_{k}\|_\infty\}$}
		\ENDIF
		\STATE{Chop extreme values of the stepsize such that $\alpha_{k+1}\in[\alpha_{\min},\alpha_{\max}]$}
		\STATE{$k=k+1$}
		\ENDWHILE
	\end{algorithmic}
\end{algorithm}

We compared Algorithm \ref{alunc} with the GBB \cite{raydan1997barzilai} and ABBmin \cite{Serafino2018,frassoldati2008new} methods for unconstrained optimization problems from the CUTEst collection \cite{gould2015cutest} with dimension less than or equal to $10000$. We deleted the problem if either it can not be solved in $200000$ iterations by any of the algorithms or the function evaluation exceeds one million and $147$ problems are left.

\begin{figure}[htp!b]
	\centering
	\subfloat{\label{fig:2a}\includegraphics[width=0.47\textwidth,height=0.35\textwidth]{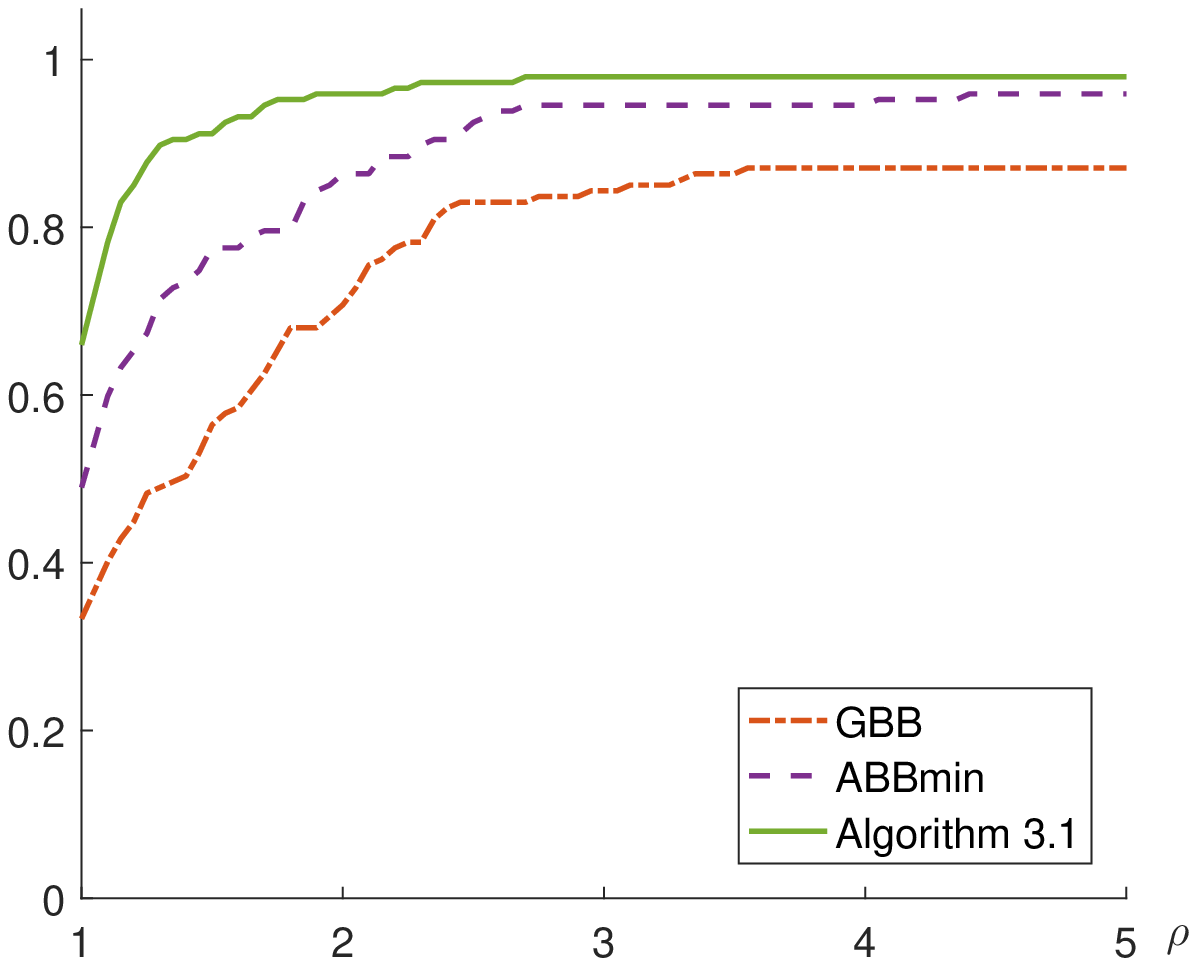}}
	\subfloat{\label{fig:2b}\includegraphics[width=0.47\textwidth,height=0.35\textwidth]{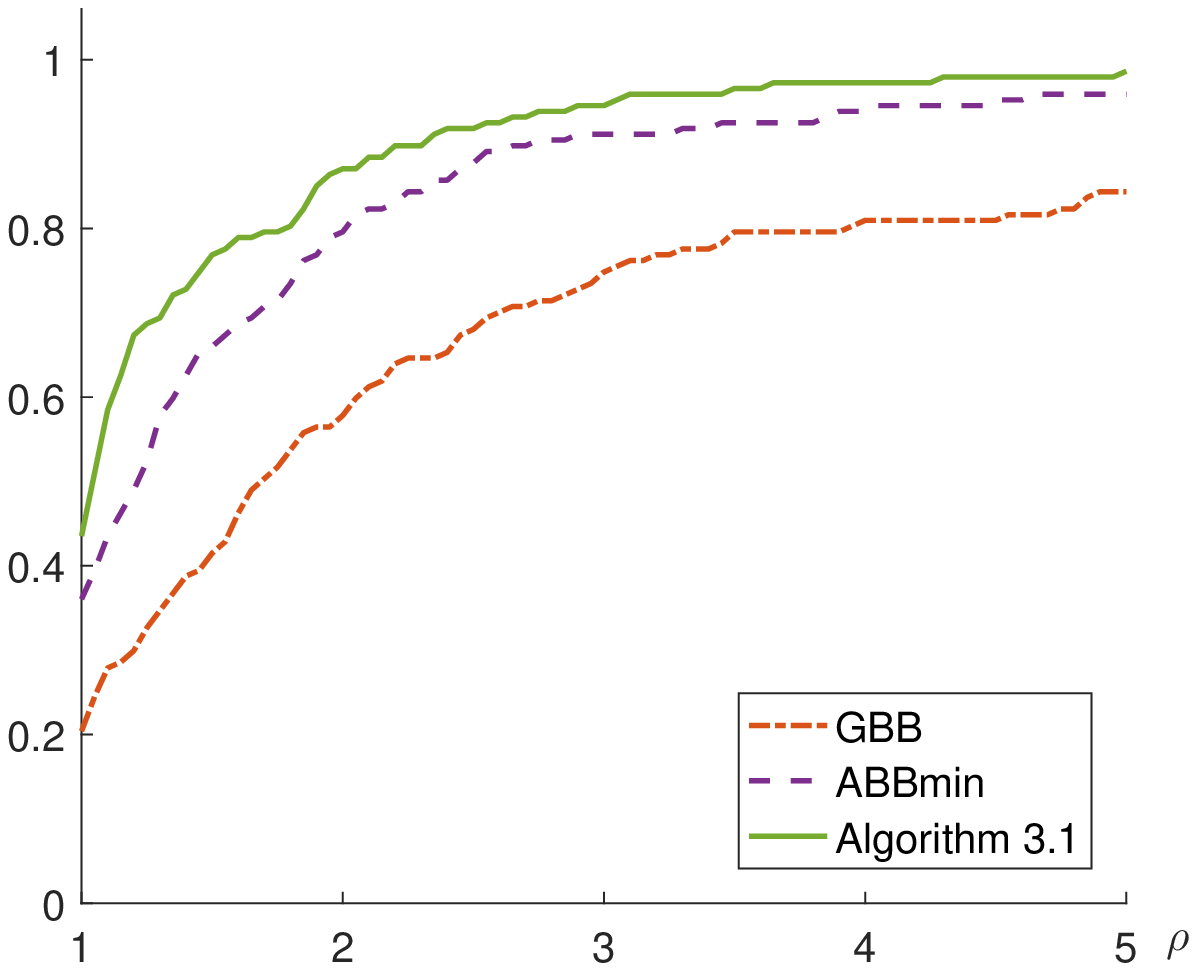}}
	\caption{Performance profiles of Algorithm \ref{alunc}, GBB and ABBmin on 147 unconstrained problems from CUTEst, iteration (left) and CPU time (right) metrics.}\label{cutestunc}
\end{figure}

To implement Algorithm \ref{alunc}, we use $\alpha_{1}=\|x_{1}\|_\infty/\|g_{1}\|_\infty$ if $x_{1}\neq0$ and otherwise $\alpha_{1}=1/\|g_{1}\|_\infty$. In addition, we set $\alpha_{2}=\alpha_{2}^{BB1}$ for the case $\alpha_{2}^{BB1}>0$ and otherwise choose $\alpha_{2} = \min\{1/\|g_{2}\|_\infty,\|x_{2}\|_\infty/\|g_{2}\|_\infty\}$.
The following parameters were employed  for Algorithm \ref{alunc}:
$$\alpha_{\min}=10^{-10},\, \alpha_{\max}=10^{6},\, M=10,\, \sigma=10^{-4},\,\delta=0.5,\, \tau_1=0.2, \, \gamma=1.02.$$
Default parameter settings were used for the GBB and ABBmin methods. The stopping condition $\|g_{k}\|_\infty\leq10^{-6}$ was adopted for all the three methods.

Performance profiles of Algorithm \ref{alunc}, GBB and ABBmin using iteration and CPU time metrics are plotted in Figure \ref{cutestunc}. From Figure \ref{cutestunc}, it can be seen that Algorithm \ref{alunc} performs significantly better than GBB and better than ABBmin.

\section{Extreme eigenvalues computation}\label{secegen}
In this section, we consider the problem of computing several extreme eigenvalues of large-scale real symmetric  matrices.

For a given $n\times n$ real symmetric positive definite matrix $A$, we are interested in the first $r\ll n$ largest/smallest eigenvalues and their corresponding eigenvectors, which has important applications in scientific and engineering computing such as principal component analysis \cite{daspremont2007a} and electronic structure calculation \cite{hu2018structured}. This problem can be formulated as an unconstrained optimization problem \cite{absil2002a}
\begin{equation}\label{egenray}
\min_{X\in\mathbb{R}^{n\times r}} \quad \textrm{tr}(X \tr AX(X \tr X)^{-1})
\end{equation}
or a constrained optimization problem with orthogonality constraints \cite{sameh2000the,sameh1982a}
\begin{equation}\label{egentrc}
\min_{X\in\mathbb{R}^{n\times r}} \quad \textrm{tr}(X \tr AX) \quad\textrm{s.t.} \quad X \tr X=I_r,
\end{equation}
where $I_r$ denotes the $r\times r$ identity matrix. However, it is not easy to calculate the inverse or orthogonalization of a matrix, especially in the case of large dimension. To avoid these difficulties, Jiang et al. \cite{jiang2014} provides the following equivalent unconstrained reformulation
\begin{equation}\label{egenvp}
\min_{X\in\mathbb{R}^{n\times r}} P_\mu(X)=\frac{1}{4}tr(X \tr XX \tr X)+ \frac{1}{2}tr(X \tr (A-\mu I_n)X),
\end{equation}
where $\mu>0$ is a scaling parameter. They proposed the so-called EigUncABB method for solving \eqref{egenvp}, which is very competitive with the Matlab function EIGS and other recent methods.

In order to apply Algorithm \ref{alunc} to problem \eqref{egenvp}, we replace the two BB stepsizes $\alpha_k^{BB1}$
and $\alpha_k^{BB2}$ in the calculations of $\alpha_{k}^{new}$ and \eqref{snbbmadp} by
\begin{equation*}
\alpha_{k}^{MBB1}=\frac{\textrm{tr}(S_{k-1} \tr S_{k-1})}{\textrm{tr}(S_{k-1} \tr Y_{k-1})}~~\textrm{and}~~
\alpha_{k}^{MBB2}=\frac{\textrm{tr}(S_{k-1} \tr Y_{k-1})}{\textrm{tr}(Y_{k-1} \tr Y_{k-1})},
\end{equation*}
respectively, where $S_{k-1}=X_k-X_{k-1}$ and $Y_{k-1}=\nabla P_\mu(X_k)-\nabla P_\mu(X_{k-1})$. Notice that the above two modified BB stepsizes are different from those employed by EigUncABB, which uses $|\alpha_{k}^{MBB1}|$ and $|\alpha_{k}^{MBB2}|$ in an alternate manner. We shall refer to the modified $\alpha_{k}^{new}$ as $\alpha_{k}^{Mnew}$.

Since the evaluation of $P_\mu(X)$ is expensive for large-dimension $X$, we shall adopt the Dai-Fletcher nonmonotone line search \cite{dai2005projected} to reduce the number of function evaluations. In particular, it uses $f_{best}$, $f_c$, $m$ and $M$ to update the reference value $f_{r}$ in \eqref{nonmls}, where $f_{best}=\min_{1\leq j\leq k}~f(x_j)$ is the current best function value, $f_c$ is the maximum value of the objective function since the value of $f_{best}$ was found, $m$ is the number of iterations since the value of $f_{best}$ was obtained, and $M$ is a preassigned number. The value of $f_{r}$ is unchanged if the method find a better function value in $M$ iterations. Otherwise, if $m=M$, $f_{r}$ is reset to $f_c$ and $f_c$ is reset to the current function value. See \cite{dai2005projected} for details on this line search.

Our method for problem \eqref{egenvp} is formally presented in Algorithm \ref{aleig}. The global convergence can be established similarly to that for EigUncABB.

\begin{algorithm}[ht!b]
	\caption{A gradient method for extreme eigenvalues problems}\label{aleig}
	\begin{algorithmic}
		\STATE{Initialization: $X_{1}\in \mathbb{R}^{n\times r}$,  $\tau_{1}>0$,  $\alpha_{\max}\geq\alpha_{\min}>0$, $\alpha_{1}\in[\alpha_{\min},\alpha_{\max}]$,  $\tau_{1}>0$, $\gamma>1$, $\epsilon, \sigma, \delta\in(0,1)$, $M\in\mathbb{N}$, $m=0$, $f_{r}=f_{best}=f_c=P_\mu(X_1)$, $k:=1$.  }
		
		\WHILE{$\|\nabla P_\mu(X_k)\|_{F}>\epsilon$}
		\IF{$P_\mu(X_k)< f_{best}$}
		\STATE{$f_{best}=P_\mu(X_k)$, $f_{c}=P_\mu(X_k)$, $m=0$}
		\ELSE	
		\STATE{$f_{c}=\max\{f_{c},P_\mu(X_k)\}$, $m=m+1$}
		\IF{$m=M$}
		\STATE{$f_{r}=P_\mu(X_k)$, $f_{c}=P_\mu(X_k)$, $m=0$}
		\ENDIF
		\ENDIF
		\STATE{$d_{k}=-\nabla P_\mu(X_k)$, $\lambda_{k}=\alpha_k$}
		\WHILE{the condition \eqref{nonmls} does not hold}
		\STATE{$\lambda_{k}=\delta\lambda_{k}$}
		\ENDWHILE
		\STATE{$X_{k+1}=X_{k}+\lambda_{k}d_{k}$}
		\IF{$\textrm{tr}(S_{k} \tr Y_{k})>0$}	
		\IF{ $\alpha_k^{MBB2}/\alpha_k^{MBB1}<\tau_k$ and $\textrm{tr}(S_{k-1} \tr Y_{k-1})>0$}
		\IF{ $\alpha_{k+1}^{Mnew}>0$}
		\STATE{${\alpha}_{k+1}=\min\{\alpha_{k}^{MBB2}, \alpha_{k+1}^{MBB2}, \alpha_{k+1}^{Mnew}\}$}
		\ELSE	
		\STATE{$\alpha_{k+1} =\min\{\alpha_{k}^{MBB2}, \alpha_{k+1}^{MBB2}\}$}		
		\ENDIF
		\STATE{$\tau_{k+1}=\tau_k/\gamma$}
		\ELSE	
		\STATE{$\alpha_{k+1} = \alpha_{k+1}^{MBB1}$}	
		\STATE{$\tau_{k+1}=\tau_k\gamma$}
		\ENDIF
		\ELSE	
		\STATE{$\alpha_{k+1} = |\alpha_{k+1}^{MBB1}|$}
		\ENDIF
		\STATE{Chop extreme values of the stepsize such that $\alpha_{k+1}\in[\alpha_{\min},\alpha_{\max}]$}
		\STATE{Compute $\tau_{k+1}$ by \eqref{tauk}}
		\STATE{$k=k+1$}
		\ENDWHILE
	\end{algorithmic}
\end{algorithm}

We compared Algorithm \ref{aleig} with EigUncABB on extreme eigenvalues problems which involve a $16,000\times16,000$ matrix, say $A$, generated by the \emph{laplacian} function in Matlab. The matrix $A$ can be viewed as the 3D negative Laplacian on a rectangular finite difference grid.

For Algorithm \ref{aleig}, we choose $\alpha_1=\|\nabla P_\mu(X_1)\|_F^{-1}$ and $\alpha_{2} = |\alpha_{2}^{MBB1}|$.
The other parameters in Algorithm \ref{aleig} were chosen in the same way as in the above section and default parameters for EigUncABB were employed. As suggested in \cite{jiang2014}, $\mu$ was initialized to $1.01\times\lambda_{\bar{r}}(X_1 \tr AX_1)$, where $\bar{r}=\max\{\lfloor1.1r\rfloor,10\}$ with $\lfloor\cdot\rfloor$ denoting the nearest integer less than or equal to the corresponding element. Moreover, it is updated by $\mu=1.01\lambda_{\bar{r}}(X_k\tr AX_k)$ when $\|\nabla P_\mu(X_k)\|_F\leq 0.1^{j}\|\nabla P_\mu(X_1)\|_F$ and $j\leq j_{\max}$ for some integers $j$ and $j_{\max}$. Specifically, the value of $j_{\max}$ was set to $3$.
The initial value of $j$ was set to $1$ and it will be increased by one if $\mu$ is updated. We generated initial points by the following Matlab codes
\begin{equation*}
\textrm{seed} = 100;\, \textrm{rng(seed,`twister')};\, X_1 = \textrm{randn}(n,r); \,  X_1 = \textrm{orth}(X_1).
\end{equation*}
Both methods were terminated provided $\|\nabla P_\mu(X_k)\|_F\leq10^{-3}$.

To measure the quality of computed solutions, we calculate the relative eigenvalue error and residual error of the $i$-th eigenpair as
\begin{equation*}
\textrm{err}_i=\frac{|\bar{\lambda}_i-\lambda_i|}{\max\{1,|\lambda_i|\}}~~\textrm{and}~~
\textrm{resi}_i=\frac{|A\bar{u}_i-\bar{\lambda}_i\bar{u}_i|}{\max\{1,|\bar{\lambda}_i|\}},
\end{equation*}
respectively. Here ${\lambda}_i$ is the true $i$-th eigenvalue, $\bar{u}_i$ and $\bar{\lambda}_i$ are the $i$-th eigenvector and eigenvalue obtained by the compared algorithms, respectively.

\begin{table}[th!b]
	\setlength{\tabcolsep}{1.4ex}
	\caption{Results of EigUncABB and Algorithm \ref{aleig} on extreme eigenvalue problems.}\label{tbeigcmp}
	\footnotesize
	\begin{center}
		\begin{tabular}{|c|c|c|c|c|c|c|c|c|c|c|}
			\hline
			\multicolumn{1}{|c|}{\multirow{1}{*}{}}
			&\multicolumn{5}{c|}{EigUncABB}  &\multicolumn{5}{c|}{Algorithm \ref{aleig}}\\
			\cline{2-11}
			$r$   &resi  &err  &iter &nfe  &time    &resi  &err  &iter  &nfe  &time \\
			\hline		
			20     &1.4e-05  &9.8e-09  &140  &160  &2.25  	&4.2e-06   &3.3e-09  &136  &144  &2.10   \\
			50     &1.6e-05  &1.8e-08  &146  &170  &4.58    &2.3e-05   &2.1e-07  &128  &133  &3.90   \\
			100    &7.1e-05  &2.0e-09  &167  &183  &10.39   &1.4e-04   &1.4e-08  &137  &146  &8.70   \\
			200    &3.8e-06  &5.2e-10  &192  &216  &28.97   &1.1e-06   &5.4e-10  &168  &175  &24.26  \\
			300    &4.1e-06  &8.1e-11  &160  &188  &44.98   &4.9e-06   &2.4e-11  &149  &163  &39.91  \\
			400    &6.9e-07  &6.7e-13  &148  &168  &64.40   &4.0e-06   &3.7e-11  &166  &178  &67.94  \\
			500    &3.2e-06  &2.9e-11  &201  &227  &122.32  &6.8e-07   &7.7e-14  &171  &188  &101.95 \\
			600    &3.2e-06  &1.6e-12  &204  &238  &169.76  &4.4e-06   &6.4e-12  &172  &190  &137.86 \\
			700    &6.0e-07  &1.1e-12  &185  &212  &199.48  &2.6e-06   &3.3e-12  &166  &185  &177.61 \\
			800    &2.4e-06  &1.9e-11  &208  &234  &270.53  &1.3e-05   &4.7e-11  &194  &211  &244.68 \\
			900    &3.5e-06  &7.3e-12  &171  &194  &268.97  &1.0e-07   &1.9e-13  &164  &175  &243.93 \\
			1000   &1.3e-06  &2.1e-12  &211  &240  &388.06  &1.1e-06   &9.7e-13  &194  &208  &341.42 \\
			\hline
		\end{tabular}
	\end{center}
\end{table}

In Table \ref{tbeigcmp}, ``resi'' denotes mean values of resi$_i$, ``err'' denotes mean values of err$_i$, $i=1,\cdots,r$, ``iter'' is the number of iterations, ``nfe'' is the total number of function evaluations and ``time'' is the CPU time in seconds. From Table \ref{tbeigcmp}, we can see that Algorithm \ref{aleig} is comparable to  EigUncABB in the sense of relative eigenvalue error and residual error. Moreover, Algorithm \ref{aleig} outperforms EigUncABB in terms of iterations, function evaluations and CPU time for most values of $r$.

\section{Special constrained optimization}\label{alcon}
In this section, we extend Algorithm \ref{alunc} to solve two special constrained optimization of the form
\begin{equation}\label{conprob}
\min_{x\in\Omega} ~~f(x),
\end{equation}
where $\Omega \subseteq \mathbb{R}^n$ is a closed convex set and $f$ is a Lipschitz continuously differentiable function on $\Omega$.

Our algorithm for solving problem \eqref{conprob} falls into the gradient projection category, which calculates the search direction by
\begin{equation}\label{dirc}
d_k=P_{\Omega}(x_k-\alpha_kg_k)-x_k,
\end{equation}
with $P_{\Omega}(\cdot)$ being the Euclidean projection onto $\Omega$ and $\alpha_k$ being the stepsize. For a general closed convex set $\Omega$, the projection $P_{\Omega}(\cdot)$ may not be easy to compute. However, for the box-constrained optimization, where $\Omega=\{x\in\mathbb{R}^n|~l\leq x\leq u\}$, we have $P_{\Omega}(x)=\max\{l,\min\{x,u\}\}$. Here, $l \le x \le u$ means componentwise; namely, $l_i \le x_i \le u_i$ for all $i=1, \ldots, n$. In addition, for singly linearly box-constrained optimization, the projection to its feasible
set $\Omega=\{x\in\mathbb{R}^n|~l\leq x\leq u,~a \tr x=b\}$ with $a\in \mathbb{R}^n$ and $b\in \mathbb{R}$ can efficiently be computed by for example the secant-like algorithm developed in \cite{dai2006new}. In what follows, we will focus on box-constrained optimization and singly linearly box-constrained optimization.

For the calculation of the stepsize $\alpha_k$, we can simply utilize the formula \eqref{snbbmadp}. However, this unmodified stepsize cannot capture the feature of the constraints well in the context of constrained optimization. Therefore we will employ the following stepsize
\begin{equation}\label{newgmd1}
{\alpha}_{k}=
\begin{cases}
\min\{\bar{\alpha}_{k-1}^{BB2},\bar{\alpha}_{k}^{BB2},\bar{\alpha}_{k}^{new}\},& \text{if $\bar{\alpha}_{k}^{BB2}/\bar{\alpha}_{k}^{BB1}<\tau_k$;} \\
\bar{\alpha}_{k}^{BB1},& \text{otherwise},
\end{cases}
\end{equation}
where $\tau_k$ is updated by the rule \eqref{tauk}, $\bar{\alpha}_{k}^{BB1}$ and $\bar{\alpha}_{k}^{BB2}$ are two modified BB stepsizes specified in the following subsections, and the stepsize $\bar{\alpha}_{k}^{new}$ is defined by replacing $\alpha_{k}^{BB1}$ and $\alpha_{k}^{BB2}$ in $\alpha_{k}^{new}$ with $\bar{\alpha}_{k}^{BB1}$ and $\bar{\alpha}_{k}^{BB2}$, respectively.

We present our method for problem \eqref{conprob} in Algorithm \ref{al1}. For the first two stepsizes in Algorithm \ref{al1}, we employ the same choices as  Algorithm \ref{alunc} but replace $\|g_{j}\|_\infty$ by $\|P_{\Omega}(x_{j}-g_{j})-x_{j}\|_{\infty}$ for $j=1,2$ and $\alpha_{2}^{BB1}$ by $\bar{\alpha}_{2}^{BB1}$, respectively. Global convergence and $R$-linear convergence of Algorithm \ref{al1} can be established as in \cite{huang2015rate}.

\begin{algorithm}[ht!b]
	\caption{Projected gradient method for problem \eqref{conprob}}\label{al1}
	\begin{algorithmic}
		\STATE{Initialization: $x_{1}\in \mathbb{R}^n$, $\alpha_{\max}\geq\alpha_{\min}>0$, $\alpha_{1}\in[\alpha_{\min},\alpha_{\max}]$,  $\tau_{1}>0$, $\gamma>1$, $\epsilon, \sigma, \delta\in(0,1)$, $M\in\mathbb{N}$, $k:=1$. }
		
		\WHILE{$\|P_{\Omega}(x_{k}-g_{k})-x_{k}\|_{\infty}>\epsilon$}
		\STATE{Compute the search direction $d_k$ by \eqref{dirc}, $\lambda_{k}=\alpha_k$}
		\STATE{$f_{r}=\max_{0\leq i\leq \min\{k,M-1\}} f(x_{k-i})$}
		\WHILE{the condition \eqref{nonmls} does not hold}
		\STATE{$\lambda_{k}=\delta\lambda_{k}$}
		\ENDWHILE
		\STATE{$x_{k+1}=x_{k}+\lambda_{k}d_{k}$}
		\IF{$s_{k} \tr \bar{y}_{k}>0$}
		\IF{ $\bar{\alpha}_k^{BB2}/\bar{\alpha}_k^{BB1}<\tau_k$ and $s_{k-1} \tr \bar{y}_{k-1}>0$}
		\IF{ $\bar{\alpha}_{k+1}^{new}>0$}
		\STATE{${\alpha}_{k+1}=\min\{\bar{\alpha}_{k}^{BB2}, \bar{\alpha}_{k+1}^{BB2}, \bar{\alpha}_{k+1}^{new}\}$}
		\ELSE	
		\STATE{$\alpha_{k+1} =\min\{\bar{\alpha}_{k}^{BB2}, \bar{\alpha}_{k+1}^{BB2}\}$}
		\ENDIF
		\STATE{$\tau_{k+1}=\tau_k/\gamma$}
		\ELSE	
		\STATE{$\alpha_{k+1} = \bar{\alpha}_{k+1}^{BB1}$}	
		\STATE{$\tau_{k+1}=\tau_k\gamma$}
		\ENDIF
		\ELSE	
		\STATE{$\alpha_{k+1} = \min\{1/\|P_{\Omega}(x_{k}-g_{k})-x_{k}\|_\infty,\|x_{k}\|_{\infty}/\|P_{\Omega}(x_{k}-g_{k})-x_{k}\|_{\infty}\}$}
		\ENDIF
		\STATE{Chop extreme values of the stepsize such that $\alpha_{k+1}\in[\alpha_{\min},\alpha_{\max}]$}
		\STATE{$k=k+1$}
		\ENDWHILE
	\end{algorithmic}
\end{algorithm}

\subsection{Box-constrained optimization}


As pointed out by \cite{huang2019gradient,hdlz2019}, for the box-constrained problem, a projected gradient method often eventually solves an unconstrained problem in the subspace corresponding to free variables. So, it is useful to modify the stepsize by considering those free variables only. To this end, we consider to replace the gradients of $f$ in  the two BB stepsizes by those of the Lagrangian function
\begin{equation*}\label{lagrange}
\mathcal{L}(x,\delta,\zeta)=f(x)-\delta \tr (x-l)-\zeta \tr (u-x),
\end{equation*}
where $\delta,\zeta\in\mathbb{R}^n$ are Lagrange multipliers. That is,
\begin{equation}\label{bsbb-1-2}
\bar{\alpha}_k^{BB1}  =\frac{s_{k-1} \tr s_{k-1}}{s_{k-1} \tr \bar{y}_{k-1}} \quad \mbox{and} \quad  \bar{\alpha}_k^{BB2}=\frac{s_{k-1} \tr \bar{y}_{k-1}}{\bar{y}_{k-1} \tr \bar{y}_{k-1}},
\end{equation}
where
\begin{align}\label{barybox}
\bar{y}_{k-1}&=\nabla_x\mathcal{L}(x_k,\delta_k,\zeta_k)-\nabla_x\mathcal{L}(x_{k-1},\delta_{k-1},\zeta_{k-1})\nonumber\\
&=y_{k-1}-(\delta_k-\delta_{k-1})+(\zeta_k-\zeta_{k-1}).
\end{align}
In this way, the above two modified BB stepsizes take the constraints into consideration. Use the sets $\mathcal{A}_k$ and $\mathcal{I}_k=\mathcal{N}\setminus\mathcal{A}_k$ to estimate the active and inactive sets at $x_k$, respectively, where $\mathcal{N}=\{1,2,\ldots,n\}$. Based on the above analysis, we simply set $\bar{y}_{k-1}^{(i)}=0$ for $i\in\mathcal{A}_k$. Notice that, by the first-order optimality conditions of problem \eqref{conprob}, the Lagrange multipliers for free variables are zeros. Then we set $\delta_k^{(i)}-\delta_{k-1}^{(i)}=0$ and $\zeta_k^{(i)}-\zeta_{k-1}^{(i)}=0$ for $i\in\mathcal{I}_k$.
Hence, $\bar{y}_{k-1}$ can be written as
\begin{equation} \label{bary}
\bar{y}_{k-1}^{(i)}=\left\{
\begin{array}{ll}
0, & \hbox{ if $i\in\mathcal{A}_k$;} \\
g_k^{(i)}-g_{k-1}^{(i)}, & \hbox{ otherwise.}
\end{array}
\right.
\end{equation}
In our test, we set $\mathcal{A}_k=\{i\in\mathcal{N}~|~s_{k-1}^{(i)}=0\}$, which is suitable for box-constrained optimization. The above two modified BB stepsizes often yield better performance than the original BB stepsizes, see for example \cite{huang2019gradient,hdlz2019}. Here we mention that similar modified BB stepsizes are presented in \cite{Crisci2019} for box-constrained quadratic programming.

Now we compare Algorithm \ref{al1} with SPG\footnote{codes available at \url{https://www.ime.usp.br/~egbirgin/tango/codes.php}} \cite{birgin2000nonmonotone,birgin2014spectral} and BoxVABBmin \cite{Crisci2019} for box-constrained problems from the CUTEst collection \cite{gould2015cutest} with dimension larger than $50$. Notice that SPG is a generalization of GBB with the long BB stepsize $\alpha_k^{BB1}$ and BoxVABBmin is a variant of ABBmin with the modified BB stepsizes \eqref{bsbb-1-2} using a different  $\mathcal{A}_k$. Three of the problems were deleted since none of the three algorithms can solve them successfully and hence $47$ problems are left in our test.

\begin{figure}[thp!b]
	\centering
	\subfloat{\label{fig:3a}\includegraphics[width=0.47\textwidth,height=0.35\textwidth]{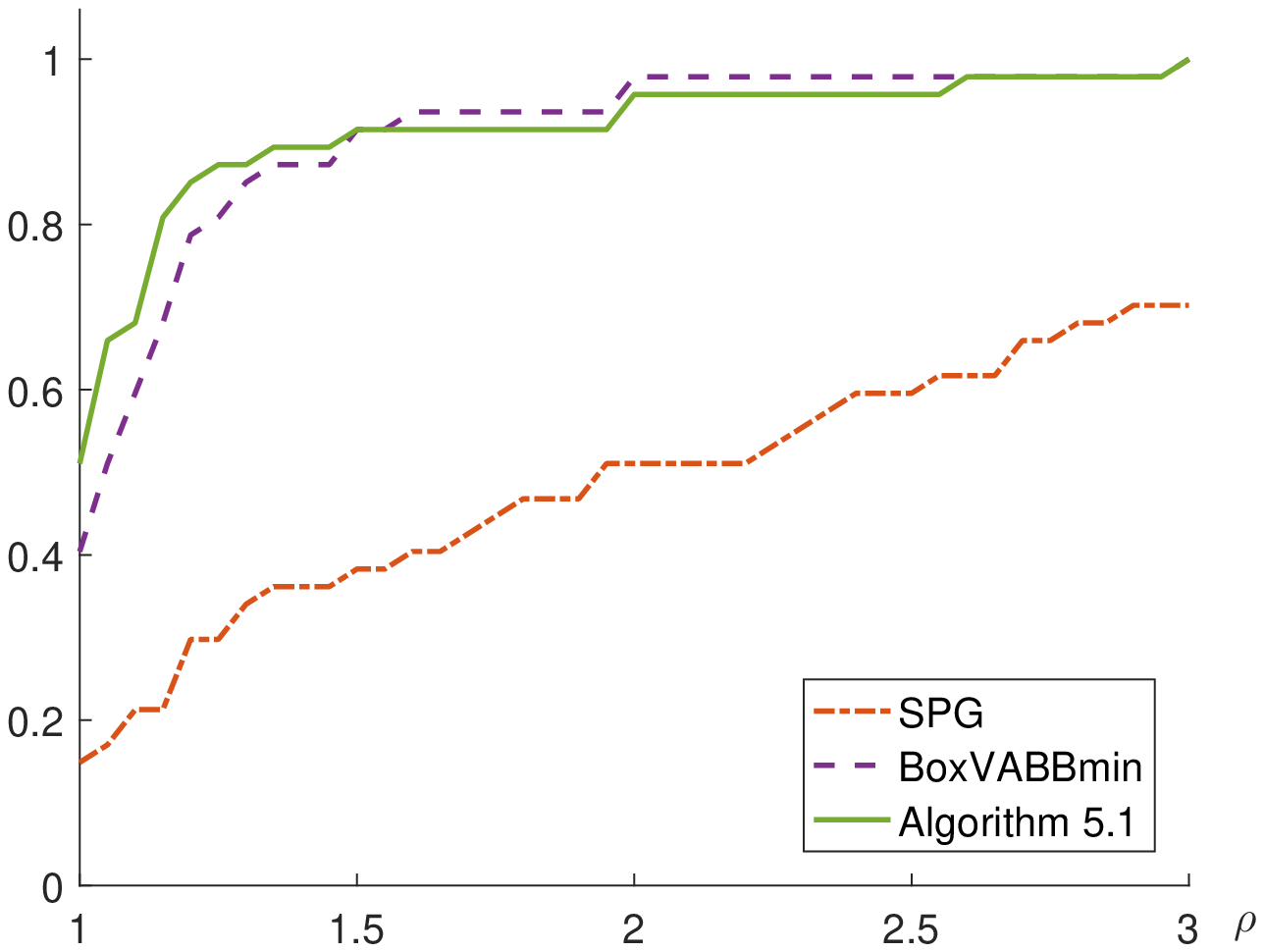}}
	\subfloat{\label{fig:3b}\includegraphics[width=0.47\textwidth,height=0.35\textwidth]{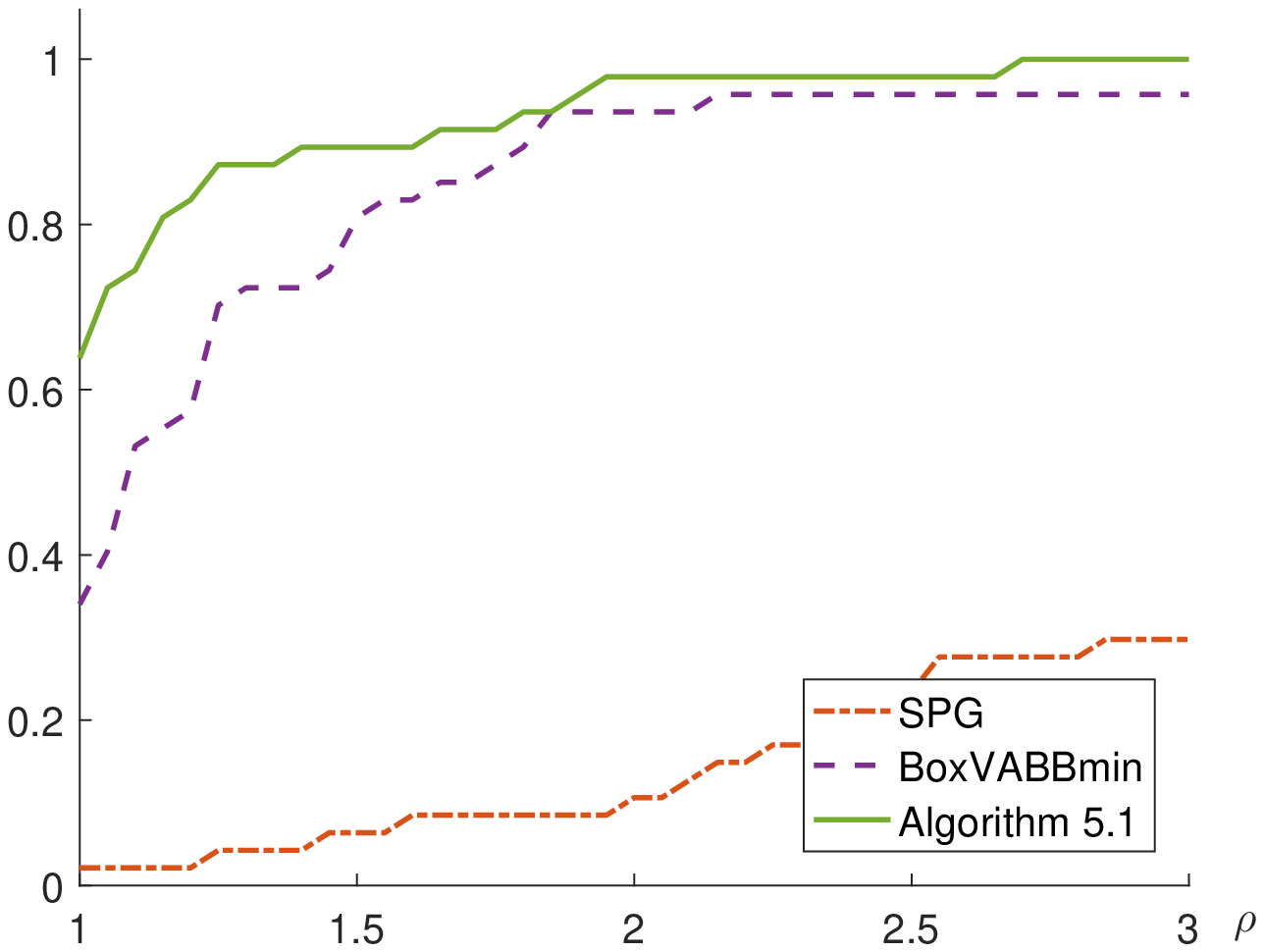}}
	\caption{Performance profiles of Algorithm \ref{al1}, SPG and BoxVABBmin for the remaining $47$ box-constrained problems from CUTEst, iteration (left) and CPU time (right) metrics.}\label{cutest}
\end{figure}

We adopted default parameters for SPG and BoxVABBmin and used the same settings for Algorithm \ref{al1} as for the unconstrained case. The algorithms were terminated if either $\|P_{\Omega}(x_{k}-g_{k})-x_{k}\|_\infty\leq10^{-6}$ or the number of iterations exceeds $200000$.

Figure \ref{cutest} presents the performance profiles of Algorithm \ref{al1}, SPG and BoxVABBmin based on iteration  and CPU time metrics. Clearly, Algorithm \ref{al1} and BoxVABBmin perform much better than SPG while Algorithm \ref{al1} outperforms BoxVABBmin in both number of iterations and CPU time.

\subsection{Singly linearly box-constrained optimization}
Now we shall consider the solution of the singly linearly box-constrained (SLB) optimization problem.
To improve the BB stepsizes \eqref{bsbb-1-2} by taking the constraints into consideration, similarly to \eqref{barybox}, denote
\begin{align}
\bar{y}_{k-1}&=\nabla_x\mathcal{L}(x_k,\delta_k,\zeta_k,t_k)-\nabla_x\mathcal{L}(x_{k-1},\delta_{k-1},\zeta_{k-1},t_{k-1})\nonumber\\
&=y_{k-1}-(\delta_k-\delta_{k-1})+(\zeta_k-\zeta_{k-1})-(t_k-t_{k-1})a,
\end{align}
where
\begin{equation*}\label{lagrange-slb}
\mathcal{L}(x,\delta,\zeta,t)=f(x)-\delta \tr (x-l)-\zeta \tr (u-x)-t(a \tr x-b).
\end{equation*}
Similarly to the box-constrained case, we set $\bar{y}_{k-1}^{(i)}=0$ for $i\in\mathcal{A}_k$, and $\delta_k^{(i)}-\delta_{k-1}^{(i)}=0$ and $\zeta_k^{(i)}-\zeta_{k-1}^{(i)}=0$ for $i\in\mathcal{I}_k$. Again by the first-order optimality conditions of problem \eqref{conprob}, $\nabla_x\mathcal{L}$ is zero at the solution, which yields
$$\bar{y}_{k-1}^{(\mathcal{I}_k)}=y_{k-1}^{(\mathcal{I}_k)}-(t_k-t_{k-1})a^{(\mathcal{I}_k)}=0.$$
It is necessary to estimate the Lagrange multipliers $t_k$ and $t_{k-1}$ for computing $\bar{y}_{k-1}^{(\mathcal{I}_k)}$. A simple way is to estimate $t_k-t_{k-1}$ directly by
$$t_k-t_{k-1}=\frac{(a^{(\mathcal{I}_k)}) \tr y_{k-1}^{(\mathcal{I}_k)}}{(a^{(\mathcal{I}_k)}) \tr a^{(\mathcal{I}_k)}}.$$
Thus, we have
\begin{equation} \label{baryslb}
\bar{y}_{k-1}^{(i)}=\left\{
\begin{array}{ll}
0, & \hbox{ if $i\in\mathcal{A}_k$;} \\
y_{k-1}^{(\mathcal{I}_k)}-\frac{(a^{(\mathcal{I}_k)}) \tr y_{k-1}^{(\mathcal{I}_k)}}{(a^{(\mathcal{I}_k)}) \tr a^{(\mathcal{I}_k)}}a^{(\mathcal{I}_k)}, & \hbox{ otherwise.}
\end{array}
\right.
\end{equation}
This together with  \eqref{bsbb-1-2} provides us the modified BB stepsizes for SLB problems. For the case that
\begin{equation}\label{actsetlsb}
\mathcal{A}_k=\{i\in\mathcal{N}~|~x_k^{(i)}=x_{k-1}^{(i)}=l_i~~\mathrm{or}~~x_k^{(i)}=x_{k-1}^{(i)}=u_i\}
\end{equation}
(see \cite{Serena2020}), the stepsizes in \eqref{bsbb-1-2} are derived in a different way.

In what follows, we compare Algorithm \ref{al1} with the Dai-Fletcher method \cite{dai2006new} and the EQ-VABBmin method \cite{Serena2020} for random SLB problems and SLB problems arising in support vector machines. The methods were terminated once $\|x_k-x_{k-1}\|_2\leq\epsilon$ or the total number of iterations exceeds $100000$. We set $\tau_1=0.5$ and $\gamma=1.3$ for Algorithm \ref{al1} and used default parameters for the Dai-Fletcher and EQ-VABBmin methods.

\subsubsection{Random SLB problems}
We employ the procedure in \cite{dai2006new} to generate random SLB problems, which is based on the generation
of random SPD box-constrained quadratic test problems in \cite{More1989}. In particular, it uses five parameters $n,~ncond,~ndeg,~na(x^*)$ and $na(x_1)$ to determine the number of variables, condition number of the Hessian, amount of near-degeneracy, active variables at the solution $x^*$ and active variables at the starting point $x_1$, respectively. It generates SLB problems in the following form
\begin{equation*}
\begin{aligned}[t]
\min~~&\frac{1}{2}x \tr Ax-c \tr x\\
s.t.~~&l\leq x\leq u\\
~~&a \tr x=b,
\end{aligned}
\end{equation*}
where $A=PDP \tr $ with
$P=(I-2p_3p_3 \tr )(I-2p_2p_2 \tr )(I-2p_1p_1 \tr ),$
$D$ is a diagonal matrix whose $i$-th component is defined by
$$\log d_i=\frac{i-1}{n-1}ncond,~i=1,\ldots,n,$$
and $p_i$, $i=1,2,3,$ are random vectors whose elements are sampled from a uniform distribution in $(-1,1)$. See \cite{dai2006new} for more details on the generation of the problems.

\begin{figure}[thp!b]
	\centering
	\includegraphics[width=0.6\textwidth,height=0.45\textwidth]{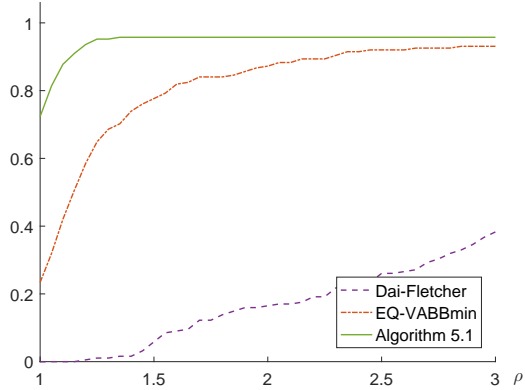}\\
	\caption{Performance profiles of Algorithm \ref{al1}, Dai-Fletcher and EQ-VABBmin on random SLB problems, iteration metric.}
	\label{spd}
\end{figure}

In our test, we set $n=10000$ and chose other parameters from
$$ncond\in\{4,5,6\},~ndeg\in\{1,3,5,7,9\},~na(x^*)\in\{1000,5000,9000\}.$$
We randomly generated one problem for each case and then got $45$ problems in total. Four different starting points with $na(x_1)\in\{0,1000,5000,9000\}$ were generated for each problem. The tolerance parameter $\epsilon$ was set to $10^{-8}$ for this test.

Figure \ref{spd} plots the performance profiles of the three algorithms based on the required number of iterations.
From Figure \ref{spd}, we can see that the EQ-VABBmin method is better than the Dai-Fletcher method, whereas
Algorithm \ref{al1} performs much better than both methods.

\subsubsection{Support vector machines}
Support vector machines (SVMs) are popular models in machine learning, especially suitable for classification, which can be formulated as an SLB problem, see \cite{cortes1995support} for details.  Suppose that we are given a training set of labeled examples
\begin{equation*}
D=\{(z_i,w_i),~i=1,\ldots,n,~z_i\in\mathbb{R}^m,~w_i\in\{1,-1\}\}.
\end{equation*}
The SVM model employs some kernel function, say $K:\mathbb{R}^m\times\mathbb{R}^m\rightarrow\mathbb{R}$, to classify new examples $z\in\mathbb{R}^m$ by a decision function $F:\mathbb{R}^m\rightarrow\{1,-1\}$ defined as
\begin{equation*}
F(z)=\mathrm{sign}\left(\sum_{i=1}^{n}x^*w_iK(z_i,z_j)+b^*\right),
\end{equation*}
where $x^*$ solves
\begin{equation*}
\begin{aligned}[t]
\min~~&\frac{1}{2}x \tr Gx-e \tr x\\
s.t.~~&0\leq x\leq Ce\\
~~&w \tr x=0.
\end{aligned}
\end{equation*}
Here, $G$ has entries $G_{ij}=w_iw_jK(z_i,z_j)$, $i,j=1,\ldots,n$, $C$ is a parameter of
the SVM model, and $e$ is the vector of all ones. The quantity $b^*\in\mathbb{R}$ is easily derived after $x^*$ is computed.

Using the Gaussian kernel
\begin{equation*}
K(z_i,z_j)=\exp\left(-\frac{\|z_i-z_j\|_2^2}{2\sigma^2}\right),
\end{equation*}
we have tested three real-world datasets for the binary classification: a9a, w8a, and ijcnn1, which can be downloaded from the LIBSVM website\footnote{\url{www.csie.ntu.edu.tw/~cjlin/libsvmtools/}}. For each dataset, we randomly chose $1000$ examples to generate the test problem. The parameters $C$ and $\sigma$ were set to $1$ and $\sqrt{10},$ respectively.

\begin{table}[htp!b]
	\setlength{\tabcolsep}{1.2ex}
	\caption{Results of Algorithm \ref{al1}, Dai-Fletcher and EQ-VABBmin on SVM problems with $n=1000$.}\label{tbsvm}
	\centering
	\begin{tabular}{|c|c|c|c|c|c|c|}
		\hline
		\multicolumn{1}{|c|}{\multirow{2}{*}{methods}} &\multicolumn{2}{c|}{\multirow{1}{*}{$10^{-3}$}}
		&\multicolumn{2}{c|}{\multirow{1}{*}{$10^{-6}$}}&\multicolumn{2}{c|}{\multirow{1}{*}{$10^{-9}$}}\\
		\cline{2-7}
		&   iter  &  CPU   &   iter  &  CPU   &   iter  &  CPU  \\
		\hline			
		&\multicolumn{6}{c|}{\multirow{1}{*}{a9a}}\\
		\cline{2-7}	
		Dai-Fletcher &   253  &  0.70  &   537  &  1.61  &   687  &  2.22  \\
		EQ-VABBmin   &   132  &  0.46  &   300  &  0.86  &   453  &  1.30  \\
		Algorithm \ref{al1}    &   121  &  0.38  &   253  &  0.84  &   468  &  1.54  \\
		\hline
		&\multicolumn{6}{c|}{\multirow{1}{*}{w8a}}\\
		\cline{2-7}
		Dai-Fletcher &   202  &  0.55  &   782  &  2.21  &  1028  &  2.73 \\
		EQ-VABBmin &   107  &  0.32  &   454  &  1.33  &   679  &  1.92 \\
		Algorithm \ref{al1}  &    78  &  0.24  &   284  &  0.81  &   550  &  1.59 \\
		
		\hline
		&\multicolumn{6}{c|}{\multirow{1}{*}{ijcnn1}}\\
		\cline{2-7}
		Dai-Fletcher &   293  &  0.81  & 41784  &145.69  & 84698  &290.48 \\
		EQ-VABBmin  &   123  &  0.39  & 23928  & 82.99  & 29647  & 90.57 \\
		Algorithm \ref{al1}  &    63  &  0.17  &  6487  & 20.44  & 21238  & 65.22 \\
		\hline
	\end{tabular}
\end{table}
Three different tolerance values were tested: $\epsilon=10^{-3},10^{-6},10^{-9}$. The null vector was employed as the initial point for all the compared methods. Table \ref{tbsvm} presents the required number of iterations and CPU time in seconds costed by the compared methods for different tolerance requirements. It can be seen from Table \ref{tbsvm} that Algorithm \ref{al1} often performs better than the other two methods.

\section{Conclusion and discussion}\label{secclu}

We have introduced a mechanism for the gradient method to achieve the two dimensional quadratic termination. Based on the mechanism, we derived a novel stepsize $\alpha_k^{new}$ (see the formula \eqref{snewa}) such that the Barzilai-Borwein (BB) method enjoys the two dimensional quadratic termination by equipping with $\alpha_k^{new}$. This novel stepsize only makes use of the BB stepsizes in previous iterations and thus can easily be adopted to general unconstrained and constrained optimization. We developed a new efficient gradient method (see the method
\eqref{snbbmadp}) that adaptively takes long BB steps and short steps associated with $\alpha_k^{new}$ for unconstrained quadratic optimization. Then based on the method \eqref{snbbmadp} and two nonmonotone line searches, we were able to design efficient gradient algorithms for unconstrained optimization problems and extreme eigenvalues problems, see Algorithms \ref{alunc} and \ref{aleig}. By incorporating the gradient projection technique and taking the constraints into consideration, we developed an efficient projected gradient algorithm, Algorithm \ref{al1}, for both box-constrained optimization and singly linearly box-constrained (SLB) optimization problems. Our numerical experiments demonstrated the efficiency of these algorithms over many successful numerical methods in the literature.

The results achieved in this paper further emphasize the importance of the two dimensional quadratic optimization model in the analysis of gradient methods for optimization. We are wondering whether higher-dimensional quadratic
optimization models will be helpful in the construction of more efficient gradient methods. This is an interesting
issue worthwhile investigation.


\begin{thebibliography}{10}
	
	\bibitem{absil2002a}
	{\sc P.~Absil, R.~Mahony, R.~Sepulchre, and P.~Van~Dooren}, {\em A
		Grassmann-Rayleigh quotient iteration for computing invariant subspaces},
	SIAM Rev., 44 (2002), pp.~57--73.
	
	\bibitem{akaike1959successive}
	{\sc H.~Akaike}, {\em On a successive transformation of probability
		distribution and its application to the analysis of the optimum gradient
		method}, Ann. Inst. Stat. Math., 11 (1959), pp.~1--16.
	
	\bibitem{Barzilai1988two}
	{\sc J.~Barzilai and J.~M. Borwein}, {\em Two-point step size gradient
		methods}, IMA J. Numer. Anal., 8 (1988), pp.~141--148.
	
	\bibitem{birgin2000nonmonotone}
	{\sc E.~G. Birgin, J.~M. Mart{\'\i}nez, and M.~Raydan}, {\em Nonmonotone
		spectral projected gradient methods on convex sets}, SIAM J. Optim., 10
	(2000), pp.~1196--1211.
	
	\bibitem{birgin2014spectral}
	{\sc E.~G. Birgin, J.~M. Mart{\'\i}nez, and M.~Raydan}, {\em Spectral
		projected gradient methods: review and perspectives}, J. Stat. Softw., 60
	(2014), pp.~539--559.
	
	\bibitem{Bonettini2008}
	{\sc S.~Bonettini, R.~Zanella, and L.~Zanni}, {\em A scaled gradient projection
		method for constrained image deblurring}, Inverse Probl., 25 (2008),
	015002.
	
	\bibitem{cauchy1847methode}
	{\sc A.~Cauchy}, {\em M{\'e}thode g{\'e}n{\'e}rale pour la r{\'e}solution des
		systemes di'{\'e}quations simultan{\'e}es}, Comp. Rend. Sci. Paris, 25
	(1847), pp.~536--538.
	
	\bibitem{cortes1995support}
	{\sc C.~Cortes and V.~Vapnik}, {\em Support-vector networks}, Mach. Learn., 20
	(1995), pp.~273--297.
	
		\bibitem{Crisci2019}
	{\sc S.~Crisci, V.~Ruggiero, and L.~Zanni}, {\em Steplength selection in gradient projection methods for box-constrained quadratic programs}, Appl.  Math. Comput., 356 (2019),	pp.~312--327.
	
	\bibitem{Serena2020}
	{\sc S.~Crisci, F.~Porta, V.~Ruggiero, and L.~Zanni}, {\em Spectral properties of Barzilai-Borwein rules in solving singly linearly constrained optimization problems subject to lower and upper bounds}, SIAM J. Optim., 30 (2020), pp.~1300--1326.
	
	\bibitem{dai2003alternate}
	{\sc Y.-H. Dai}, {\em Alternate step gradient method}, Optimization, 52 (2003),
	pp.~395--415.
	
	\bibitem{dai2005projected}
	{\sc Y.-H. Dai and R.~Fletcher}, {\em Projected Barzilai-Borwein methods for
		large-scale box-constrained quadratic programming}, Numer. Math., 100 (2005),
	pp.~21--47.
	
	\bibitem{dai2006new}
	{\sc Y.-H. Dai and R.~Fletcher}, {\em New algorithms for singly linearly
		constrained quadratic programs subject to lower and upper bounds}, Math.
	Program., 106 (2006), pp.~403--421.
	
	\bibitem{dai2006cyclic}
	{\sc Y.-H. Dai, W.~W. Hager, K.~Schittkowski, and H.~Zhang}, {\em The cyclic
		Barzilai-Borwein method for unconstrained optimization}, IMA J. Numer.
	Anal., 26 (2006), pp.~604--627.
	
	\bibitem{dhl2018}
	{\sc Y.-H. Dai, Y.~Huang, and X.-W. Liu}, {\em A family of spectral gradient
		methods for optimization}, Comput. Optim. Appl., 74 (2019), pp.~43--65.
	
	\bibitem{dai2002r}
	{\sc Y.-H. Dai and L.-Z. Liao}, {\em $R$-linear convergence of the Barzilai and
		Borwein gradient method}, IMA J. Numer. Anal., 22 (2002), pp.~1--10.
	
	\bibitem{dai2005analysis}
	{\sc Y.-H. Dai and Y.-X. Yuan}, {\em Analysis of monotone gradient methods}, J.
	Ind. Manag. Optim., 1 (2005), pp.~181--192.
	
	\bibitem{dai2001adaptive}
	{\sc Y.-H. Dai and H.~Zhang}, {\em Adaptive two-point stepsize gradient
		algorithm}, Numer. Algorithms, 27 (2001), pp.~377--385.
	
	\bibitem{daspremont2007a}
	{\sc A.~Daspremont, L.~E. Ghaoui, M.~I. Jordan, and G.~R.~G. Lanckriet}, {\em A
		direct formulation for sparse PCA using semidefinite programming}, SIAM Rev.,
	49 (2007), pp.~434--448.
	
	\bibitem{de2014efficient}
	{\sc R.~De~Asmundis, D.~Di~Serafino, W.~W. Hager, G.~Toraldo, and H.~Zhang},
	{\em An efficient gradient method using the Yuan steplength}, Comp. Optim.
	Appl., 59 (2014), pp.~541--563.
	
	\bibitem{Serafino2018}
	{\sc D.~Di~Serafino, V.~Ruggiero, G.~Toraldo, and L.~Zanni},
	{\em On the steplength selection in gradient methods for unconstrained optimization}, Appl. Math. Comput., 318 (2018), pp.~176--195.
	
	
	\bibitem{dolan2002}
	{\sc E.~D. Dolan and J.~J. Mor{\'e}}, {\em Benchmarking optimization software
		with performance profiles}, Math. Program., 91 (2002),
	pp.~201--213.
	
	\bibitem{fletcher2005barzilai}
	{\sc R.~Fletcher}, {\em On the Barzilai-Borwein method}, in: Optimization and Control with Applications, Volume 96, L. Qi, K. Teo and X. Yang, eds., Springer, Boston, 2005, pp.~235--256.
	
	\bibitem{forsythe1968asymptotic}
	{\sc G.~E. Forsythe}, {\em On the asymptotic directions of the $s$-dimensional
		optimum gradient method}, Numer. Math., 11 (1968), pp.~57--76.
	
	\bibitem{frassoldati2008new}
	{\sc G.~Frassoldati, L.~Zanni, and G.~Zanghirati}, {\em New adaptive stepsize
		selections in gradient methods}, J. Ind. Manag. Optim., 4 (2008),
	pp.~299--312.
	
	\bibitem{friedlander1998gradient}
	{\sc A.~Friedlander, J.~M. Mart{\'\i}nez, B.~Molina, and M.~Raydan}, {\em
		Gradient method with retards and generalizations}, SIAM J. Numer. Anal., 36
	(1998), pp.~275--289.
	
	\bibitem{gould2015cutest}
	{\sc N.~I. Gould, D.~Orban, and P.~L. Toint}, {\em CUTEst: a constrained and
		unconstrained testing environment with safe threads for mathematical
		optimization}, Comp. Optim. Appl., 60 (2015), pp.~545--557.
	
	\bibitem{grippo1986nonmonotone}
	{\sc L.~Grippo, F.~Lampariello, and S.~Lucidi}, {\em A nonmonotone line search
		technique for Newton's method}, SIAM J. Numer. Anal., 23
	(1986), pp.~707--716.
	
	
	\bibitem{hu2018structured}
	{\sc J.~Hu, B.~Jiang, L.~Lin, Z.~Wen, and Y.~Yuan}, {\em Structured
		quasi-Newton methods for optimization with orthogonality constraints}, SIAM
	J. Sci. Comput., 41 (2019), pp.~2239--2269.
	
	\bibitem{huang2019asymptotic}
	{\sc Y.~Huang, Y.-H. Dai, X.-W. Liu, and H.~Zhang}, {\em On the asymptotic
		convergence and acceleration of gradient methods}, arXiv preprint
	arXiv:1908.07111,  (2019).
	
	\bibitem{huang2019gradient}
	{\sc Y.~Huang, Y.-H. Dai, X.-W. Liu, and H.~Zhang}, {\em Gradient methods
		exploiting spectral properties},  Optim. Method Softw., 35 (2020), pp.~681--705.
	
	\bibitem{hdlz2019}
	{\sc Y.~Huang, Y.-H.~Dai, X.-W.~Liu, and H.~Zhang}, {\em On the acceleration of
		the Barzilai-Borwein method}, arXiv preprint arXiv:2001.02335,  (2020).
	
	\bibitem{huang2015rate}
	{\sc Y.~Huang and H.~Liu}, {\em On the rate of convergence of projected
		Barzilai-Borwein methods}, Optim. Method Softw., 30 (2015), pp.~880--892.
	
	\bibitem{jiang2014}
	{\sc B.~Jiang, C.~Cui, and Y.-H.~Dai}, {\em Unconstrained optimization models
		for computing several extreme eigenpairs of real symmetric matrices}, Pac.
	J. Optim., 10 (2014), pp.~53--71.
	
    \bibitem{lisun2021}
    {\sc D.-W.~Li and R.-Y~Sun}, {\em On a faster $R$-linear convergenc rate of the Barzilai-Borwein method}, arXiv preprint arXiv:2101.00205,  (2021).

	\bibitem{More1989}
	{\sc J.~Mor\'{e} and G.~Toraldo}, {\em Algorithms for bound constrained
		quadratic programming problems}, Numer. Math., 55 (1989), pp.~377--400.

    \bibitem{raydan1993}
    {\sc M.~Raydan}, {\em On the Barzilai and Borwein choice of steplength for the
    gradient method}, IMA J. Numer. Anal., 13 (1993), pp.~321--326.
	
	\bibitem{raydan1997barzilai}
	{\sc M.~Raydan}, {\em The Barzilai and Borwein gradient method for the large
		scale unconstrained minimization problem}, SIAM J. Optim., 7 (1997),
	pp.~26--33.
	
	\bibitem{raydan2002relaxed}
	{\sc M.~Raydan and B.~F. Svaiter}, {\em Relaxed steepest descent and
		Cauchy-Barzilai-Borwein method}, Comp. Optim. Appl., 21 (2002), pp.~155--167.
	
	\bibitem{sameh2000the}
	{\sc A.~Sameh and Z.~Tong}, {\em The trace minimization method for the
		symmetric generalized eigenvalue problem}, J. Comput. Appl. Math., 123
	(2000), pp.~155--175.
	
	\bibitem{sameh1982a}
	{\sc A.~Sameh and J.~A. Wisniewski}, {\em A trace minimization algorithm for
		the generalized eigenvalue problem}, SIAM J. Numer. Anal., 19
	(1982), pp.~1243--1259.
	
	\bibitem{yuan2006new}
	{\sc Y.-X. Yuan}, {\em A new stepsize for the steepest descent method}, J.
	Comput. Math., 24 (2006), pp.~149--156.
	
	\bibitem{yuan2008step}
	{\sc Y.-X. Yuan}, {\em Step-sizes for the gradient method}, AMS IP Studies in
	Advanced Mathematics, 42 (2008), pp.~785--796.
	
	\bibitem{zhang2004nonmonotone}
	{\sc H.~Zhang and W.~W. Hager}, {\em A nonmonotone line search technique and
		its application to unconstrained optimization}, SIAM J. Optim., 14 (2004),
	pp.~1043--1056.
	
	\bibitem{zhou2006gradient}
	{\sc B.~Zhou, L.~Gao, and Y.-H. Dai}, {\em Gradient methods with adaptive
		step-sizes}, Comp. Optim. Appl., 35 (2006), pp.~69--86.
	
\end{thebibliography}
\end{document}